\documentclass[12pt]{article}
\usepackage{graphicx}
\usepackage{amssymb}
\usepackage{amsmath}
\usepackage{amsfonts}
\usepackage{amsthm}
\usepackage{accents}
\usepackage[english]{babel}
\usepackage[latin1,ansinew]{inputenc}
\usepackage{a4wide}
\usepackage{tikz}
\usepackage{color}
\newcommand{\be}{\begin{eqnarray}}
\newcommand{\ee}{\end{eqnarray}}
\newtheorem{theo}{Theorem}
\newtheorem{prop}{Proposition}

\newtheorem{lemma}{Lemma}

\newcommand{\R}{\mathbb R}
\newcommand{\C}{\mathbb C}

\newcommand{\bxi}{\boldsymbol\xi}
\newcommand{\bomega}{\boldsymbol\omega}

\newcommand{\mJ}{{\mathcal J}}

\begin{document}
\date{}

\title{Uniform Decay for \\  
Ruggeri's Relaxed Navier-Stokes-Fourier Equations}
\author{Heinrich Freist\"uhler\thanks{Department of Mathematics, University of Konstanz, 
78457 Konstanz, Germany}\qquad Yizhou Zhou\thanks{Corresponding author,
IGPM, RWTH Aachen, 52072 Aachen, Germany (zhou@igpm.rwth-aachen.de)} 
}
\maketitle

\begin{abstract}
This work studies the linearization of Ruggeri's classical symmetric hyperbolic model of viscous, heat-conductive fluids about a homogeneous equilibrium. In three space dimensions, we show that perturbations in \(L^1\cap L^2\) decay in \(L^2\) at the rate \((1+t)^{-3/4}\), uniformly as a common parameter scaling the three relaxation times tends to zero. The estimate controls the equilibrium variables together with the appropriately weighted stresses and heat flux, and has the same decay rate as the formal Navier--Stokes--Fourier limit.
\end{abstract}

\section{The result}
In his landmark 1983 paper \cite{R83}, the starting point of Rational Extended Thermodynamics
\cite{MR,RS}, Ruggeri gave the first formulation for the dynamics of viscous, heat conductive   
compressible fluids as a symmetric hyperbolic system of conservation laws. His equations of 
motion, in which in the list of independent variables the equilibrium quantities, temperature 
$\theta$, thermodynamic potential $\psi$, and velocity $u$, are complemented by the stress 
$\Sigma+\sigma I$ and the heat flux $q$, 
are a physically well justified relaxed version of the Navier-Stokes-Fourier system,
in which three Maxwell-Cattaneo type retardation mechanisms implement finite speed of propagation
for the dissipation effects of shear viscosity, bulk viscosity, and heat conduction.  

The purpose of the present work is to study the uniform decay property of Ruggeri's model, as the relaxation parameter tends to zero. 
As a prerequisite for a later study of the fully nonlinear situation, we here investigate the linear case. 
According to \cite{F22}, the linearization of Ruggeri's model 
at a homogeneous equilibrium state 
$(\bar\theta,\bar\psi,\bar u,\bar\Sigma,\bar\sigma,\bar q)=(\bar\theta,\bar\psi,0,0,0,0)$ is given by
\be\label{linR}
A^0 U_t+\sum_{j=1}^3A^jU_{x_j}=B U, \quad U=(V,W)=((\check\psi,u,\check\theta),(\Sigma,\sigma,q))
\ee
with  
$$
A^0_\epsilon=
\begin{pmatrix}
       \theta^{-1}\hat p_{\psi\psi}&0&-\hat p_\psi+\theta \hat p_{\theta\psi}&0&0&0\\
       0&\hat p_\psi&0&0&0&0\\
       -\hat p_\psi+\theta \hat p_{\theta\psi}&0&\theta^3\hat p_{\theta\theta}&0&0&0\\
 0&0&0&\epsilon^2\tau_1\rho\delta^{(kl)(rs)}&0&0\\
   0&0&0&0&\epsilon^2\tau_2\rho&0\\
   0&0&0&0&0&\epsilon^2\tau_3\rho\delta^{mn}
       \end{pmatrix}
$$
$$
\ A^j=
\begin{pmatrix}
       0&\hat p_\psi\delta^{nj}&0&0&0&0\\
       \hat p_\psi\delta^{mj}&0&\theta^2 \hat p_\theta \delta^{mj}&\theta C^{mjrs}&\theta\delta^{mj}&0\\
       0&\theta^2\hat p_\theta\delta^{nj}&0&0&0&\theta^2\delta^{nj}\\ 
       0&\theta C^{njkl}&0&0&0&0\\
       0&\theta \delta^{nj}&0&0&0&0\\
       0&0&\theta^2\delta^{mj}&0&0&0      
       \end{pmatrix},\quad j=1,2,3,
$$
$$
B=
\begin{pmatrix}
   \ 0\ &\ 0\ &\ 0\ &0&0&0\\
   0&0&0&0&0&0\\
   0&0&0&0&0&0\\
   0&0&0&-(1/\eta)\delta^{(kl)(rs)}&0&0\\
   0&0&0&0&-1/\zeta&0\\
   0&0&0&0&0&-(1/\chi)\delta^{mn}
\end{pmatrix},\qquad\qquad
$$
where $p=\hat p(\theta,\psi)$, its derivatives,  and $\rho=\hat p_\psi(\theta,\psi)/\theta$ are evaluated 
at the reference state $(\bar\theta,\bar\psi)$.
We assume throughout that $\bar\theta>0$, $\eta,\zeta,\chi>0$,
$\hat p_\psi(\bar\theta,\bar\psi)>0$, and that the $2\times2$ matrix
$$
\begin{pmatrix}
\bar\theta^{-1}\hat p_{\psi\psi}
&
-\hat p_\psi+\bar\theta\hat p_{\theta\psi}\\[1mm]
-\hat p_\psi+\bar\theta\hat p_{\theta\psi}
&
\bar\theta^3\hat p_{\theta\theta}
\end{pmatrix}.
$$
is positive definite.
These are the equilibrium versions of the standard conditions
\cite{F22}. The coefficients $\tau_1,\tau_2,\tau_3$ are
positive, and $\Sigma$ is symmetric and trace-free.

The goal of this work is to show the following $L^2$ estimate, which is the starting point for its obvious extensions
to $L^2$-based Sobolev spaces.
\begin{theo}
There exists a constant $C>0$ such that for
any $\epsilon\in(0,\epsilon_0]$ with $\epsilon_0$ a sufficiently small constant, and any data 
$$
U_0=(V_0,W_0)=((\check\psi_0,u_0,\check\theta_0),(\Sigma_0,\sigma_0,q_0))\in L^2\cap L^1,
$$
the solution $t\mapsto (V(t),W(t))$ of system \eqref{linR} with these data obeys the estimate 
$$
\Vert(V,\epsilon W)\Vert_{L^2}
\le C(1+t)^{-3/4}(\Vert(V_0,\epsilon W_0)\Vert_{L^2}+\Vert(V_0,\epsilon W_0)\Vert_{L^1})\quad\text{for all }t>0.
$$
\end{theo}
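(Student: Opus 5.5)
We work on the Fourier side. After the transform, \eqref{linR} becomes $A^0_\epsilon \hat U_t + i|\xi| A(\omega)\hat U = B\hat U$ with $\omega=\xi/|\xi|$ and $A(\omega)=\sum_j\omega_jA^j$. The natural energy is $\langle A^0_\epsilon\hat U,\hat U\rangle$, which is equivalent, uniformly in $\epsilon$, to $|\hat V|^2+\epsilon^2|\hat W|^2$; this is exactly why the statement weights $W$ by $\epsilon$. The energy identity
$$\frac{d}{dt}\langle A^0_\epsilon\hat U,\hat U\rangle = 2\,\mathrm{Re}\langle B\hat U,\hat U\rangle\le -c|\hat W|^2$$
gives dissipation only in $W$. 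We want a pointwise estimate
$$|\hat V(t,\xi)|^2+\epsilon^2|\hat W(t,\xi)|^2\le C e^{-c\rho_\epsilon(\xi)t}\big(|\hat V_0|^2+\epsilon^2|\hat W_0|^2\big),$$
with a rate $\rho_\epsilon(\xi)$ that, uniformly in $\epsilon$, satisfies $\rho_\epsilon(\xi)\ge c|\xi|^2$ for $|\xi|\le r_0$.

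Given that, the decay estimate follows by the usual splitting. On the low frequencies, $\int_{|\xi|\le r_0}e^{-c|\xi|^2t}\,d\xi\,\|\hat U_0\|_{L^\infty}^2\lesssim(1+t)^{-3/2}\|(V_0,\epsilon W_0)\|_{L^1}^2$. On the high frequencies we need $\rho_\epsilon\ge c$ on $|\xi|\ge r_0$. In fact, since the $L^2$ estimate cannot assume derivatives, we need a genuine uniform bound there of the form $\le Ce^{-ct}$. That is plausible because $B$ is damping with strength $\sim\epsilon^{-2}$ relative to $A^0_\epsilon$, so there is no regularity loss. We should check that the symbolic condition holds: the Kawashima (SK) condition, i.e. that $\ker B$ contains no eigenvector of $A(\omega)$ with $A^0$ weight. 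This follows from the coupling blocks $\theta C$, $\theta\delta$ and $\theta^2\delta$.

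To obtain the rate, we construct a compensating skew-symmetric matrix $K(\omega)$ in the spirit of Shizuta--Kawashima, and work with the Lyapunov functional
$$E=\langle A^0_\epsilon\hat U,\hat U\rangle+\alpha\,\mathrm{Re}\,\frac{|\xi|}{1+|\xi|^2}\,\epsilon^{\beta}\langle iK\hat U,\hat U\rangle .$$
Concretely, $K$ pairs $\hat q$ with $\hat\theta$, $(\hat\Sigma,\hat\sigma)$ with $\hat u$ (longitudinal and transversal parts), and $\hat u\cdot\omega$ with $\hat\psi$. The resulting cross terms recover $|\xi|^2|\hat V|^2$ up to terms absorbed by $\epsilon^{-2}$-weighted dissipation $|\hat W|^2$.

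The main obstacle is the $\epsilon$-uniformity of this construction. The terms $\langle K A^0_\epsilon\hat U_t,\hat U\rangle$ involve $\hat W_t$, which carries a factor $\epsilon^{-2}$. So the powers of $\epsilon$ in the weights must be chosen so that the recovered $V$-dissipation is $O(1)\cdot\min(|\xi|^2,1)$, not $O(\epsilon^k)$. To make this transparent, we would first rescale $W=\tilde W/\epsilon$, so that $\tilde W$ has an $O(1)$ weight and $B$ becomes $\epsilon^{-2}$-strong while the coupling becomes $\epsilon^{-1}$. We then treat the two regimes separately. For $|\xi|\le r_0$, not depending on $\epsilon$, we use perturbation theory in the diffusive scaling: we expand the eigenvalues of $-(i|\xi|A(\omega)-B)$ relative to $A^0_\epsilon$. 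The Chapman--Enskog reduction gives parabolic eigenvalues $-\lambda_{NSF}|\xi|^2+O(|\xi|^3+\epsilon^2|\xi|^4)$, matching Navier--Stokes--Fourier, plus fast eigenvalues $\sim-\epsilon^{-2}$. The radius $r_0$ is uniform because the expansion parameter is $|\xi|$ together with $\epsilon|\xi|$. For $|\xi|\ge r_0$ we use the energy-method functional with the weight $\epsilon\min(1,\epsilon|\xi|)^{-1}$-type factors tuned so that the cross terms are absorbed. If uniform bounds for the intermediate range $r_0\le|\xi|\lesssim\epsilon^{-1}$ turn out delicate, we would handle it by compactness on the rescaled variable $\epsilon\xi$ together with a strict dissipativity argument from the SK condition, which holds uniformly in $\omega$ on the unit sphere.
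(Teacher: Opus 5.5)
Your proposal leaves open the step that carries the whole difficulty, namely $\epsilon$-uniform damping for $\xi\ge r_0$ (``weights tuned so that the cross terms are absorbed''). The two concrete devices you offer for it both fail.

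The critical mode is the direction $\alpha_1$ spanning $\ker\tilde A_{21}(\bomega)$, the $\check\psi$-direction. Because $\alpha_1^T\tilde A_{12}=0$ and $\alpha_1^T\tilde A_{11}\alpha_1=0$, it is not coupled to $W$ at all. It is damped only indirectly, through $u$, at a rate of order one rather than $\epsilon^{-2}$. So your heuristic that $B$ damps with strength $\epsilon^{-2}$ does not reach it.

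The functional you write, with one factor $\alpha\epsilon^\beta\xi/(1+\xi^2)$ in front of a fixed $K(\bomega)$, cannot produce this damping uniformly. Differentiating the pairing of $\hat W$ with $(\hat u,\hat\theta)$ generates $\epsilon^{-2}\mJ\hat W$. Absorbing that into the only available $W$-dissipation $c|\hat W|^2$ forces $\beta\ge2$. With $\beta\ge2$, however, the pairing of $\hat\psi$ with $\bomega\cdot\hat u$ returns only $O(\epsilon^2)\frac{\xi^2}{1+\xi^2}|\hat\psi|^2$, which degenerates as $\epsilon\to0$. The two pairings need different powers of $\epsilon$.

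The compactness fallback fails as well. The variable $\epsilon\xi$ ranges over a non-compact set. Moreover, for $\epsilon\to0$ with $\epsilon\xi=k$ fixed, $\epsilon^2M\to\begin{pmatrix}0&ik\tilde A_{12}\\ ik\tilde A_{21}&\tilde\mJ\end{pmatrix}$, which has eigenvalue $0$ with eigenvector $(\alpha_1,0)$. So there is no strict dissipativity to propagate; the required real part, of order $\epsilon^2$ for $\epsilon^2M$, appears only at second order. Extracting exactly this second-order term, separately for $\epsilon\xi\ll1$, $\epsilon\xi\gg1$ and $\epsilon\xi\sim1$, is the content of Lemmas~\ref{lemmaK}--\ref{lemmaG}. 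At low frequencies, likewise, an eigenvalue expansion alone gives no uniform semigroup bound. You need a uniformly bounded block-diagonaliser, as in Proposition~\ref{prop2-small}.

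Your energy route can still be completed, and it is then much shorter than the paper's case analysis. It needs two compensators with different $\epsilon$-scalings, the second subordinate to the first. In $Z=(A^0_\epsilon)^{1/2}\hat U=(Z_1,Z_2)$ put $a=\alpha_1^TZ_1$, $w=Z_1-a\alpha_1$, and $\gamma=\tilde A_{11}(\bomega)\alpha_1$, which is nonzero and orthogonal to $\alpha_1$. Then take
\[
\mathcal E=|Z|^2+\delta\epsilon\frac{\xi}{1+\xi^2}\,\mathrm{Re}\langle Z_2,i\tilde A_{21}Z_1\rangle-\delta^{3/2}\frac{\xi}{1+\xi^2}\,\mathrm{Re}\bigl(i\bar a\,\gamma^TZ_1\bigr).
\]
This is $\epsilon^2$ on the $\hat W$--$\hat V$ pairing and $\epsilon^0$ on the $\hat\psi$--$u$ pairing in your original variables. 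Two facts drive the estimate: $a_t=-i\xi\gamma^Tw$, and $|\tilde A_{21}w|\ge c|w|$ uniformly in $\bomega$. With these, the two cross terms contribute $-\delta\frac{\xi^2}{1+\xi^2}|\tilde A_{21}Z_1|^2$ and $-\delta^{3/2}\frac{\xi^2}{1+\xi^2}|\gamma|^2|a|^2$ to $\frac{d}{dt}\mathcal E$. Once $\delta$ is small, all remainders are absorbed by these terms and by $c\epsilon^{-2}|Z_2|^2$, for every $\xi\ge0$ and $\epsilon\in(0,1]$. Hence $\frac{d}{dt}\mathcal E\le-c\frac{\xi^2}{1+\xi^2}\mathcal E$. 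Writing $\mathcal E=Z^*HZ$, this is Proposition~1 with $R=H^{-1/2}$, for all frequencies at once.

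What the paper's perturbative block-diagonalisation gives in exchange is spectral information that the energy method does not: the Navier--Stokes--Fourier block at low frequency, and explicit second-order rates for the slow mode.
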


The significance of Theorem 1 lies in the uniformity of the decay estimate with respect to the relaxation parameter. For each fixed \(\epsilon>0\), large-time decay may be obtained from the standard dissipative theory for hyperbolic relaxation systems, see \cite{SK85,F22}; however, the corresponding constants generally need not remain bounded as \(\epsilon\searrow0\). Theorem 1 shows that the solutions decay at the three-dimensional diffusion rate \((1+t)^{-3/4}\), uniformly for sufficiently small \(\epsilon\). In particular, this estimate exhibits the same large-time decay rate as the formal limiting Navier--Stokes--Fourier system.
\be\label{NSV}
A_{01}\bar V_t+\sum_{j=1}^dA_{j11}\bar V_{x_j}=
\sum_{i,j=1}^d A_{i12}J^{-1} A_{j21}\bar V_{x_ix_j}
\ee
However, Theorem 1 is a uniform decay result rather than a relaxation-limit theorem; quantitative convergence as \(\epsilon\searrow0\) requires a separate comparison argument.



The hyperbolic relaxation of the compressible Navier--Stokes--Fourier equations and its zero-relaxation limit have also been studied extensively. A pioneering contribution in this direction is due to Yong \cite{Y}, 
in which the author, independently of \cite{R83}, revised Maxwell's idea of relaxing
the viscous stress by doing it separately for the shear and the bulk viscosity, in the nonlinear setting. His system has a conservation-dissipation structure and is symmetrizable hyperbolic.
For that setting Yong showed 
convergence, as $\epsilon\searrow 0$, on every fixed interval $[0,T]$ on which the limiting compressible Navier-Stokes solution remains smooth.

This line of research was subsequently continued by Hu and Racke \cite{HR1,HR2,HR3}. The multidimensional non-isentropic case was studied in \cite{HR1}, where the shear and bulk stresses are relaxed separately as in Yong's model, while Fourier's law is retained for heat conduction. The analysis was extended in \cite{HR2} to a one-dimensional, fully hyperbolic model incorporating Galilean-invariant Maxwell and Cattaneo laws for the stress and heat flux. A multidimensional counterpart was considered in \cite{HR3}, although only the heat flux and bulk stress are relaxed, while the shear stress remains Newtonian. These works establish small-data global well-posedness and the corresponding zero-relaxation limits in finite time. In addition, \cite{HR3} proves finite-time blow-up for a class of large data when the shear viscosity vanishes. The models in \cite{HR2,HR3} possess a dissipative physical entropy; but since they
are not in divergence form, this entropy cannot be used in the sense of \cite{G,FL,B} to symmetrize the system.


Using Oldroyd-type constitutive laws, Peng and Zhao \cite{PZ1,PZ2} studied a first-order approximation of the full compressible Navier--Stokes equations. In the one-dimensional setting considered there, the resulting system coincides with that of \cite{HR2}. Uniform global existence and global-in-time convergence to the compressible Navier--Stokes equations were addressed. This result was further strengthened in \cite{PZ2}, where quantitative global-in-time convergence rates were obtained.
More refined ways of relaxing the dissipative mechanisms include upper convected Maxwell 
derivatives. Boyaval \cite{By} has considered this with respect to the viscosity. In 
\cite{CKX} Crin-Barat, Kawashima, and Xu relax the heat conduction in this way. They established uniform global well-posedness for small initial data, large-time decay estimates, and quantitative convergence to the Navier--Stokes--Fourier equations as the relaxation parameter tends to zero.

We now briefly comment on the proof of Theorem 1, which will follow in Sec.\ 2.
After taking the Fourier transform in space, the proof reduces to constructing a uniformly bounded change of basis under which the Hermitian part of the transformed symbol is bounded below by
\[
c\frac{|\xi|^2}{1+|\xi|^2},
\]
with \(c>0\) independent of sufficiently small \(\epsilon\). To construct this transformation, we divide the frequency space into three regimes. For small frequencies, perturbative block diagonalization separates the equilibrium and dissipative modes, and the Shizuta--Kawashima condition yields dissipation of order \(|\xi|^2\) on the equilibrium block. For intermediate frequencies, the desired estimate follows from compactness and the strict positivity of the real parts of the relevant eigenvalues. The large-frequency regime is more involved because the two parameters \(\epsilon\) and \(|\xi|^{-1}\) may approach zero at different rates. We therefore introduce a polar-type parametrization and divide the parameter space according to their relative sizes. A separate perturbative block diagonalization in each region then gives the required uniform estimate. The theorem finally follows from the resulting pointwise Fourier estimate and the standard low--high frequency decomposition.


\section{The proof} 
For the linearized Ruggeri's model, we write 
$$
A^0_\epsilon=\begin{pmatrix}
       A^0_1&0\\[1mm]
       0&\epsilon^2 A^0_2 
       \end{pmatrix},
\quad
\sum_{j=1}^3\xi_jA^j=\begin{pmatrix}
            A_{11}(\bxi)&A_{12}(\bxi)\\[1mm]
            A_{21}(\bxi)&0  
            \end{pmatrix},
\quad
B=\begin{pmatrix}
   0&0\\[1mm]
   0&-\mJ 
   \end{pmatrix}
$$
and consider, with $\bxi=(\xi_1,\xi_2,\xi_3)=\xi\bomega,\bomega\in S^2,\xi\ge 0$, 
$$
M(\xi,\epsilon,\bomega)=(A^0_\epsilon)^{-1/2}\left(i\sum_{j=1}^3\xi_jA^j-B\right)(A^0_\epsilon)^{-1/2}
$$
i.e.,
\begin{align*}
M(\xi,\epsilon,\bomega)=&
\begin{pmatrix}
\phantom{\epsilon^{-1/2}}i\xi (A_1^0)^{-1/2}A_{11}(\bomega)(A_1^0)^{-1/2}& \ i\epsilon^{-1}\xi (A_1^0)^{-1/2}A_{12}(\bomega)(A_2^0)^{-1/2}\\[2mm]
i\epsilon^{-1}\xi (A_2^0)^{-1/2}A_{21}(\bomega)(A_1^0)^{-1/2}&\quad\ \ \epsilon^{-2}(A_2^0)^{-1/2}~\mJ~(A_2^0)^{-1/2}
\end{pmatrix}\\[2mm]
=&
\begin{pmatrix}
\quad\ \ \ i\xi \tilde A_{11}(\bomega)&\ i\epsilon^{-1}\xi\tilde A_{12}(\bomega)\\[2mm]
i\epsilon^{-1}\xi\tilde A_{21}(\bomega)&\epsilon^{-2}\tilde\mJ
\end{pmatrix},
\end{align*}
with 
$$
\begin{aligned}
\tilde A_{11}(\bomega)=(A^0_1)^{-1/2}A_{11}(\bomega)(A^0_1)^{-1/2},\quad
\tilde A_{12}(\bomega)&=(A^0_1)^{-1/2}A_{12}(\bomega)(A^0_2)^{-1/2},\\[2mm]
\tilde A_{21}(\bomega)=(A^0_2)^{-1/2}A_{21}(\bomega)(A^0_1)^{-1/2},\qquad\quad\
\tilde \mJ&=(A^0_2)^{-1/2}~\mJ~ (A^0_2)^{-1/2}.
\end{aligned}
$$

Following the Fourier argument in \cite{K83,UKS}, Theorem 1 is a 
consequence of the following proposition:
\begin{prop}
There exists $c>0$ and uniform transformations $R(\xi,\epsilon,\bomega)$ such that for all 
$\bomega\in S^{d-1},\epsilon\in(0,\epsilon_0],\xi\ge 0$, the transformed version 
$$
\tilde M(\xi,\epsilon,\bomega)=R(\xi,\epsilon,\bomega)^{-1}M(\xi,\epsilon,\bomega)R(\xi,\epsilon,\bomega)
$$
satisfies
$$
(*)\quad\tilde M(\xi,\epsilon,\bomega)+\tilde M(\xi,\epsilon,\bomega)^*\ge c\frac{|\bxi|^2}{1+|\bxi|^2}.
$$
\end{prop}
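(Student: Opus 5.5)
We prove Proposition 1 by splitting frequency space into three regimes, using the rescaled frequency $\xi$ together with $\epsilon$. Write $M=iN+D$ with $N$ Hermitian (the symmetric hyperbolic part) and $D=\mathrm{diag}(0,\epsilon^{-2}\tilde\mJ)\ge0$; note $\tilde\mJ$ is positive definite, being a congruence of $\mJ=\mathrm{diag}(\eta^{-1},\zeta^{-1},\chi^{-1})$. The basic mechanism is the Shizuta--Kawashima condition: no eigenvector of $\tilde A(\bomega)$ lies in $\ker D$. Concretely, $\tilde A_{21}(\bomega)$ together with $\tilde A_{11}(\bomega)$ couples every equilibrium direction to the stresses and heat flux. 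This should be checked directly from the explicit $A^j$. Pressure waves couple through $\sigma$ and $\Sigma$, transverse velocity through $\Sigma$ (since $C^{mjrs}$ produces $\omega$-symmetrized shear), and entropy/temperature through $q$. In each region we take $R=I+\text{small}$ or a bounded $\epsilon$-independent block transformation, so uniformity of $R,R^{-1}$ is automatic.

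\textbf{Low frequencies, $\xi\le \delta\epsilon$.} Put $\mu=\xi/\epsilon$. Then $M=\epsilon^{-2}\bigl(\mathrm{diag}(0,\tilde\mJ)+i\mu\epsilon\,\mathrm{offdiag}+i\mu\epsilon^{2}\tilde A_{11}\bigr)$. Here we use the standard perturbative block diagonalization $R=I+\epsilon\mu R_1+\dots$ to kill the off-diagonal blocks to second order. The equilibrium block then becomes
\[
i\xi\tilde A_{11}(\bomega)+\xi^2\tilde A_{12}\tilde\mJ^{-1}\tilde A_{21}+O(\xi^3/\epsilon^{0}\cdot\mu),
\]
which is exactly the Navier--Stokes--Fourier symbol. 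The dissipative block is $\epsilon^{-2}\tilde\mJ+O(\xi)$. On this block the bound is trivial, since $\epsilon^{-2}\ge 1\ge\xi^2/(1+\xi^2)$ up to constants. On the equilibrium block, the viscous term $\tilde A_{12}\tilde\mJ^{-1}\tilde A_{21}$ is only semidefinite. We therefore diagonalize $\tilde A_{11}(\bomega)$ further by a bounded unitary, using the eigenprojections for the distinct wave speeds $0$ and $\pm c_s$. By Shizuta--Kawashima, the projected viscosity is positive definite on each eigenspace, giving $\ge c\xi^2$. The cross terms between distinct eigenvalues are removed by a further $O(\xi)$ conjugation, which is legitimate for $\xi\le\delta$. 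If $\delta\epsilon$ is smaller than a fixed $\delta$, the strip $\delta\epsilon\le\xi\le\delta$ must also be covered. We do this with the same expansion organized in the single small parameter $\xi$ (with $\epsilon\le\epsilon_0$). Alternatively, we use a Kawashima compensating matrix $K(\bomega)$ with $R$ built from $I+\alpha\,\xi/(1+\xi^2) K$, which gives $c\xi^2/(1+\xi^2)$ for all $\xi\lesssim \epsilon^{-1}$ uniformly. I would in fact try the Kawashima compensator first for all $\xi\le K_0/\epsilon$, because its error terms scale correctly in $\epsilon$: the off-diagonal coupling $\epsilon^{-1}\xi$ is absorbed against $\epsilon^{-2}\tilde\mJ$ via Cauchy--Schwarz.

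\textbf{Intermediate and high frequencies.} Parametrize $(\epsilon,\xi^{-1})$ in polar form $\epsilon=r\cos\phi$, $\xi^{-1}=r\sin\phi$ with $r$ small. For $\xi\ge \delta$ the target is just $\ge c$. When $\xi^{-1}\gg\epsilon$, the regime is essentially the Navier--Stokes high-frequency one. Here the compensator estimate from the previous step still gives a constant lower bound. When $\epsilon\gtrsim\xi^{-1}$, the scaled symbol $\epsilon^{2}M=i\epsilon\xi\,\mathrm{offdiag}+\mathrm{diag}(0,\tilde\mJ)+i\epsilon^2\xi\tilde A_{11}$ has bounded parameters. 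Set $a=\epsilon\xi$, $b=\epsilon^2\xi=\epsilon a$. For $a$ in a compact set away from $0$, with $b$ small, we argue by compactness in $(a,\phi,\bomega)$. The limiting matrix at $b=0$ has spectrum with strictly positive real parts, again by Shizuta--Kawashima, so a smooth Lyapunov-type transformation $R$ solving $R^{-1}MR$ with a positive Hermitian part exists. The gap is uniform, and persists for $b\le\epsilon_0 a$ small. This gives $\epsilon^2$ times a lower bound $\ge c$ on $\epsilon^2\tilde M$, hence $\tilde M+\tilde M^*\ge c\epsilon^{-2}\ge c$. For $a\to\infty$ (the genuinely hyperbolic regime $\xi\gg\epsilon^{-1}$), we block-diagonalize with respect to the hyperbolic part. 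We expand in $1/a$ around the eigenprojections of the full first-order symbol $\epsilon\xi\,\mathrm{offdiag}+\epsilon^2\xi\tilde A_{11}$. The averaged damping $P\,\mathrm{diag}(0,\tilde\mJ)P$ on each characteristic eigenspace is positive. This is exactly the statement that $\epsilon^{-2}\mathrm{diag}(0,\tilde\mJ)$ damps every relaxed wave, and it again follows from the coupling structure. The result is $\ge c\epsilon^{-2}$.

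\textbf{Main obstacle.} The hard part is the matching zone where $\epsilon\xi\sim1$ while $\epsilon^2\xi$ ranges between small and order one. There the eigenvalues of the first-order symbol may have multiplicities that change with the ratio $\epsilon^2\xi/(\epsilon\xi)=\epsilon$, for example when relaxed and sound speeds coalesce. I would handle this by fixing $\epsilon_0$ small, so that $\tilde A_{11}$ enters only as an $O(\epsilon)$ perturbation there. We then use the projections of the $\epsilon$-independent matrix $\begin{pmatrix}0&\tilde A_{12}\\ \tilde A_{21}&0\end{pmatrix}$, whose spectral structure is $\epsilon$-free. Finally, patching the regions is done with finitely many overlapping pieces. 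This is allowed because $R$ need only be uniformly bounded, not continuous, in the Fourier argument of \cite{K83,UKS}.
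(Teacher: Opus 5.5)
Your treatment of the high-frequency regimes $\epsilon\xi\gtrsim1$ rests on two spectral claims that are false, so the proposal has a genuine gap.

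\textbf{Why the claims fail.} The $\check\psi$-column of $A_{21}(\bomega)$ vanishes. Hence $\tilde A_{21}(\bomega)$ has a one-dimensional kernel, spanned by some $\alpha_1$, and $(\alpha_1,0)$ is a null vector of $\begin{pmatrix}0&ia\tilde A_{12}\\ ia\tilde A_{21}&\tilde\mJ\end{pmatrix}$ for every $a$.
\begin{itemize}
\item \emph{The regime $\epsilon\xi\sim1$.} The $b=0$ limit of $\epsilon^2M$ does not have spectrum in the open right half-plane. Shizuta--Kawashima holds for the full system only because of $\tilde A_{11}$ (note $\tilde A_{11}\alpha_1\neq0$), and that term is exactly what disappears as $b=\epsilon a\to0$. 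Since $b=0$ lies in the closure of your parameter set, no compactness argument is available there.
\item \emph{The regime $\xi\gg\epsilon^{-1}$.} You propose to use the $\epsilon$-independent projections onto the six-dimensional kernel of $\begin{pmatrix}0&\tilde A_{12}\\ \tilde A_{21}&0\end{pmatrix}$. On that kernel the averaged damping $\mathrm{diag}(0,\tilde\mJ)$ vanishes on $(\alpha_1,0)$. If you instead use the exact null vector $(\alpha_1,\epsilon\beta_1)$ of the full first-order symbol, with $\tilde A_{12}\beta_1=-\tilde A_{11}\alpha_1$, the averaged damping is only of size $\epsilon^2$. A plain expansion in $1/a$ with $O(a^{-2})$ remainders then does not close when $\epsilon^{-1}\ll\xi\lesssim\epsilon^{-3}$.
\end{itemize}
In both regimes your conclusion $\tilde M+\tilde M^*\ge c\epsilon^{-2}$ is wrong. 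This mode is damped only at a rate of order one. That is enough for $(*)$, but the damping is quadratic in $\tilde A_{11}\alpha_1$ and invisible to the leading-order arguments you give. Relatedly, the ``main obstacle'' you name, coalescence of relaxed and sound speeds, is not the real difficulty: those speeds differ by a factor $\epsilon^{-1}$.

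\textbf{What is missing.} You need a second-order perturbation argument for this one eigenvalue.
\begin{itemize}
\item It vanishes identically when the $\tilde A_{11}$-term is switched off.
\item Its first-order shift $i\alpha_1^T\tilde A_{11}\alpha_1$ is zero.
\item Its second-order coefficient has the form $(\alpha_1^T\tilde A_{11}\alpha_1^\perp)\,Q^{-1}\,((\alpha_1^\perp)^T\tilde A_{11}\alpha_1)$, where $Q$ is a positive definite compression of $\tilde A_{12}\tilde\mJ^{-1}\tilde A_{21}$ to $\alpha_1^\perp$. This is positive because $\tilde A_{11}\alpha_1\neq0$ and $\tilde A_{11}\alpha_1\perp\alpha_1$.
\end{itemize}
The paper carries this out in each piece of the polar parametrization $1/\xi=\rho\bar\eta$, $\epsilon=\rho\bar\epsilon$ (Lemmas~\ref{lemmaK}, \ref{lemmaG} and~\ref{lemmaL}). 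For $\epsilon\xi\gg1$ it additionally uses that $(\alpha_1,0)$ stays null at $\rho=0$ for every $\bar\eta$. This lets the eigenvalue factor as $\bar\eta\rho^2\hat d(\rho,\bar\eta)$ with $\hat d(0,0)>0$.

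\textbf{An alternative repair.} Your compensator idea can be made to cover all frequencies, which would give a genuinely different, energy-based proof. As written, however, it is not uniform. Write $Z=(z_1,z_2)$. With a single weight $\alpha\xi/(1+\xi^2)$ in front of an $\epsilon$-independent $K(\bomega)$, absorbing the cross term $\epsilon^{-2}|z_2||\tilde A_{21}z_1|$ for $\xi\lesssim1$ forces $\alpha\lesssim\epsilon$. That destroys the uniform gain on the $\alpha_1$-component. My own check (not the paper's) suggests the estimates close for all $\xi\ge0$, uniformly in $\epsilon\le1$, if the two levels are weighted separately:
\begin{itemize}
\item the coupling functional $\operatorname{Re}\, i\langle z_2,\tilde A_{21}z_1\rangle$ carries weight $\sim\epsilon\xi/(1+\xi^2)$;
\item an equilibrium compensator $K_2(\bomega)$ for the pair $(\tilde A_{11},\tilde A_{21}^T\tilde A_{21})$, which captures $\alpha_1$, carries weight $\sim\xi/(1+\xi^2)$.
\end{itemize}
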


Indeed, let
\[
\widehat U(\bxi,t)
=
\begin{pmatrix}
\widehat V(\bxi,t)\\[1mm]
\widehat W(\bxi,t)
\end{pmatrix},
\qquad
Z(\bxi,t)
=
(A_\epsilon^0)^{1/2}\widehat U(\bxi,t).
\]
Taking the Fourier transform of \eqref{linR} with respect to $x$, we obtain
\[
Z_t+M(\bxi,\epsilon)Z=0.
\]
Suppose that there exists a uniformly regular transformation
$R=R(\bxi,\epsilon)$, meaning that $R$ and $R^{-1}$ are bounded
uniformly in $\bxi$ and $\epsilon$, such that
\[
\tilde M=R^{-1}MR
\]
satisfies
\[
\tilde M+\tilde M^*
\geq
c\frac{|\bxi|^2}{1+|\bxi|^2}I.
\]
Setting $\tilde Z=R^{-1}Z$, we find
\begin{align*}
\frac{d}{dt}|\tilde Z|^2
&=
-\tilde Z^*
\bigl(\tilde M+\tilde M^*\bigr)
\tilde Z \leq
-c\frac{|\bxi|^2}{1+|\bxi|^2}
|\tilde Z|^2.
\end{align*}
The uniform boundedness of $R$ and $R^{-1}$ therefore gives
\[
|Z(\bxi,t)|
\leq
C\exp\left(
-c\frac{|\bxi|^2}{1+|\bxi|^2}t
\right)
|Z(\bxi,0)|.
\]
Since $A_{01}$ and $A_{02}$ are positive definite, $|Z|$ is
uniformly equivalent to
\(|(\widehat V,\epsilon\widehat W)|\).
We now divide the Fourier space into the low-frequency region
$|\bxi|\leq1$ and the high-frequency region $|\bxi|\geq1$. For the
low-frequency part, the preceding estimate and
\[
\|(\widehat V_0,\epsilon\widehat W_0)\|_{L^\infty}
\leq
\|(V_0,\epsilon W_0)\|_{L^1}
\]
yield, in dimension $d=3$,
\begin{align*}
\int_{|\bxi|\leq1}
|(\widehat V,\epsilon\widehat W)(\bxi,t)|^2\,d\bxi &\leq
C\|(V_0,\epsilon W_0)\|_{L^1}^2
\int_{|\bxi|\leq1}e^{-ct|\bxi|^2}\,d\bxi\\
& \leq
C(1+t)^{-3/2}
\|(V_0,\epsilon W_0)\|_{L^1}^2.
\end{align*}
For the high-frequency part, we have
\[
\int_{|\bxi|\geq1}
|(\widehat V,\epsilon\widehat W)(\bxi,t)|^2\,d\bxi
\leq
Ce^{-ct}
\|(V_0,\epsilon W_0)\|_{L^2}^2.
\]
Combining the two estimates and using Plancherel's theorem, we obtain
\[
\|(V,\epsilon W)(t)\|_{L^2}
\leq
C(1+t)^{-3/4}
\left(
\|(V_0,\epsilon W_0)\|_{L^2}
+
\|(V_0,\epsilon W_0)\|_{L^1}
\right).
\]
This proves Theorem 1. It therefore remains to establish the stated
uniform dissipativity property of the Fourier symbol $M$. We do that separately in three regimes. 

\subsection{Small $\xi$}
\par\smallskip
We first consider the case where $\xi\leq \underline k$ for a sufficiently small constant $\underline k>0$. In this case, the result follows from the following proposition.
\begin{prop}\label{prop2-small}
There exist a constant $c > 0$ and a uniformly regular base transform $T = T(\xi, \epsilon)$ 
such that for
$$
N(\xi,\epsilon)=\epsilon^2M(\xi,\epsilon)=
\begin{pmatrix}
i\epsilon^2\xi\tilde A_{11}(\bomega)&\ i\epsilon\xi\tilde A_{12}(\bomega)\\[1mm]
\ i\epsilon\xi\tilde A_{21}(\bomega)&\tilde\mJ    
\end{pmatrix},
$$
the matrix $\tilde N(\xi,\epsilon)=T^{-1}(\xi, \epsilon)N(\xi, \epsilon)T(\xi, \epsilon)$ satisfies 
$$
\tilde N(\xi, \epsilon) + \tilde N^*(\xi, \epsilon)\geq c\epsilon^2\xi^2I
$$
for sufficiently small $\epsilon$ and $\xi$.
\end{prop}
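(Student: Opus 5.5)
The approach is a perturbative block diagonalization of $N$ in the small parameter $\kappa:=\epsilon\xi$. The statement really lives at small $\kappa$: it is required for small $\xi$, and we additionally have $\epsilon\le\epsilon_0$. Write
$$
N=\begin{pmatrix}0&0\\0&\tilde\mJ\end{pmatrix}+\kappa\begin{pmatrix}0&i\tilde A_{12}\\ i\tilde A_{21}&0\end{pmatrix}+\kappa\epsilon\begin{pmatrix}i\tilde A_{11}&0\\0&0\end{pmatrix}.
$$
Since $A^0$ is symmetric and $A^j$ is symmetric, $\tilde A_{21}=\tilde A_{12}^T$ and $\tilde A_{11}$ is real symmetric. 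Hence the $O(\kappa)$ off-diagonal part is skew-Hermitian, and $\tilde\mJ$ is symmetric positive definite.

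The first step is to construct $T=I+\kappa T_1$ with
$$
T_1=\begin{pmatrix}0&X\\Y&0\end{pmatrix},
$$
chosen to remove the off-diagonal blocks at order $\kappa$. This requires $X=-i\tilde A_{12}\tilde\mJ^{-1}$ and $Y=i\tilde\mJ^{-1}\tilde A_{21}$, i.e. $[\mathrm{diag}(0,\tilde\mJ),T_1]$ cancels the off-diagonal term. Then
$$
\tilde N=T^{-1}NT=\begin{pmatrix}i\kappa\epsilon\tilde A_{11}+\kappa^2\tilde A_{12}\tilde\mJ^{-1}\tilde A_{21}&0\\0&\tilde\mJ\end{pmatrix}+O(\kappa^2)_{\rm off}+O(\kappa^2)_{22}+O(\kappa^2\epsilon+\kappa^3)_{11}.
$$
The $(1,1)$ entry is computed from $\kappa(\text{off})\kappa T_1$ and gives $\kappa^2\tilde A_{12}\tilde\mJ^{-1}\tilde A_{21}$ with the right sign, up to checking $i\cdot(-i)$. $T$ is uniformly regular for $\kappa$ small because $\omega\in S^2$ is compact and all entries are bounded.

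The Hermitian part of the $(1,1)$ block is $2\kappa^2 D(\omega)$ with $D=\tilde A_{12}\tilde\mJ^{-1}\tilde A_{21}\ge0$, since $i\tilde A_{11}$ is skew. However, $D$ is degenerate: its kernel contains the $\psi$ direction and the combinations annihilated by $A_{21}$. Here the Shizuta--Kawashima condition must enter, in the form that no eigenvector of $\tilde A_{11}(\omega)$ lies in $\ker\tilde A_{21}(\omega)$. This is known for Ruggeri's model from \cite{F22}, and is equivalent to the Navier--Stokes--Fourier system \eqref{NSV} being strictly dissipative. The second step uses it via the standard Kawashima compensating matrix. Choose a real skew matrix $K(\omega)$, smooth in $\omega$, with $\frac12(K\tilde A_{11}-\tilde A_{11}K)+D\ge c_0 I$. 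Then apply the further transform $I+\epsilon\,\delta K$ on the first block, or equivalently use the energy $|z|^2+\delta\epsilon\,\mathrm{Re}\langle iKz,z\rangle$, realized as a similarity $S=(I+\delta\epsilon iK)^{1/2}$-type factor. Multiplying the skew term $i\kappa\epsilon\tilde A_{11}$ by the commutator with $\delta\epsilon K$ produces a Hermitian contribution $\delta\kappa\epsilon^2[\cdots]$. The combination with $\kappa^2 D=\epsilon^2\xi^2 D$ is of size $\epsilon^2\xi^2\big(\delta\,\tfrac12[K,\tilde A_{11}]+D\big)$, where the factor $\xi$ in front of the $\delta$ term needs to match. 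Precisely: the correction is $\delta\epsilon\xi\cdot\epsilon\xi$ if the transform is $I+\delta\epsilon\xi iK$. So I take the transform $I+\delta\kappa\, iK$, which gives a Hermitian part of at least $c\kappa^2=c\epsilon^2\xi^2$ on the first block for $\delta$ small. All error terms are then $O(\kappa^2(\epsilon+\xi+\delta^2))$, absorbed for small $\epsilon,\xi,\delta$.

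The final step handles the couplings. The $(2,2)$ block has Hermitian part $2\tilde\mJ+O(\kappa)\ge c_1$. The off-diagonal residuals are $O(\kappa^2)$, and by Young's inequality they contribute at most $O(\kappa^4)/c_1$ to the first block, again absorbable into $c\kappa^2$. This gives $\tilde N+\tilde N^*\ge c\epsilon^2\xi^2 I$ on the full space, since $c_1\ge c\epsilon^2\xi^2$.

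The main obstacle is the second step: verifying the Kawashima condition for the specific Ruggeri blocks, namely that $\ker A_{21}(\omega)$ contains no eigenvector of $A_{11}(\omega)$ relative to $A^0_1$. One also has to check that the error bookkeeping in $\epsilon$ and $\xi$ is separately small, not merely small in $\kappa$. I would verify the former directly. $\ker A_{21}$ consists of vectors with $u\parallel\omega$ and $u\cdot\omega=0$, hence $u=0$, and with $\check\theta=0$. The remaining vector $(\check\psi,0,0)$ is mapped by $A_{11}$ to $(0,\hat p_\psi\omega,0)\neq0$, which is not proportional to it, so the condition holds.
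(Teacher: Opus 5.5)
Your architecture (first-order block diagonalization in $\kappa=\epsilon\xi$, a Kawashima compensating matrix on the $5\times5$ block, Young's inequality for the residual couplings) can be made to work. As written, however, Step~2 fails exactly where uniformity in $\epsilon$ is decided.

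After Step~1 the first block is $F=i\kappa\epsilon\tilde A_{11}+\kappa^2D+R$. Its skew part therefore has size $\kappa\epsilon=\epsilon^2\xi$, not $\kappa$. For $K$ real skew, conjugation by $I+s\,iK$ gives
\[
(I+s\,iK)^{-1}F(I+s\,iK)=F+s\kappa\epsilon\,(K\tilde A_{11}-\tilde A_{11}K)+O\bigl(s\kappa^2+s\|R\|+s^2\kappa\epsilon\bigr).
\]
With your choice $s=\delta\kappa$, the compensating term is $\delta\epsilon^3\xi^2(K\tilde A_{11}-\tilde A_{11}K)$, a full factor $\epsilon$ below $\kappa^2D$. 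Take the unit vector $\alpha_1$ spanning the one-dimensional kernel $\ker D=\ker\tilde A_{21}$. On $\alpha_1$, the Hermitian part of the new first block is only $O(\epsilon^3\xi^2)$. So $\tilde N+\tilde N^*\ge c\,\epsilon^2\xi^2$ cannot hold with $c$ independent of $\epsilon$; you get at best $c\,\epsilon^3\xi^2$. Your line ``the correction is $\delta\epsilon\xi\cdot\epsilon\xi$'' treats the skew term as if it were $O(\kappa)$.

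The repair is to make the compensating transform independent of $\epsilon$: use $\operatorname{diag}(I+\delta\xi\,iK,\,I)$. Then $s\kappa\epsilon=\delta\kappa^2$, and the Hermitian part of the first block becomes
\[
2\kappa^2\bigl(D+\delta(K\tilde A_{11}-\tilde A_{11}K)\bigr)+O\bigl(\kappa^2(\xi+\epsilon)\bigr).
\]
This is $\ge c\kappa^2$ for fixed $\delta\le\frac12$ and small $\xi,\epsilon$, since $D\ge0$ and $D+\frac12(K\tilde A_{11}-\tilde A_{11}K)\ge c_0$. Your coupling and $(2,2)$ estimates then go through as you describe. The underlying reason is that the $5\times5$ block is $\epsilon^2$ times the Navier--Stokes--Fourier symbol $i\xi\tilde A_{11}+\xi^2D$, whose low-frequency Kawashima weight is $\xi$.

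You also need to flip your signs in Step~1. For $T^{-1}NT$ with $T=I+\kappa T_1$, cancellation at order $\kappa$ requires $X=i\tilde A_{12}\tilde\mJ^{-1}$ and $Y=-i\tilde\mJ^{-1}\tilde A_{21}$. Your signs double the off-diagonal term instead of cancelling it.

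So corrected, your route is a legitimate alternative to the paper's. The paper block-diagonalizes exactly via Kato and uses the appendix formulas to extract $V=\epsilon^2\xi\bigl(i\tilde A_{11}+\xi\tilde A_{12}\tilde\mJ^{-1}\tilde A_{21}+O(\xi^2)\bigr)$. It then cites Lemma~5 of \cite{FS21}; factoring out $\epsilon^2\xi$ first makes the $\epsilon$-bookkeeping automatic. Your explicit compensating-matrix transform is more self-contained: it is in effect a proof of that lemma in this setting, and you also verify the Shizuta--Kawashima condition for Ruggeri's blocks, which the paper takes for granted. The price is that you must track $\epsilon$ and $\xi$ separately, which is exactly where the slip occurred.
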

\begin{proof}[Proof of Proposition \ref{prop2-small}]
    Note that the unperturbed matrix 
    $$
    N(0,0)=\begin{pmatrix}
        0 & \\
          & \tilde \mJ
    \end{pmatrix}
    $$
    has $5$ zero eigenvalues and $9$ positive eigenvalues. Then according to the perturbation theory \cite{Kato-book}, there exists a regular transform $T_1(\xi,\epsilon)$ for sufficiently small $\xi$ and $\epsilon$ such that 
    $$
    T_1^{-1}(\xi, \epsilon)N(\xi, \epsilon)T_1(\xi, \epsilon) = \begin{pmatrix}
        V(\xi, \epsilon) & \\[1mm]
          & W(\xi, \epsilon)
    \end{pmatrix}
    $$
    with $V(\xi,\epsilon)\in \mathbb{C}^{5\times 5}$ and $W(\xi,\epsilon)\in \mathbb{C}^{9\times 9}$. Moreover,  the transform matrix has the form
    $$
    T_1(\xi, \epsilon) = 
    \begin{pmatrix}
        I  & X(\xi, \epsilon)\\[1mm]
        Y(\xi, \epsilon)  & I
    \end{pmatrix}
    $$
    with $X(\xi,\epsilon)\in \mathbb{C}^{5\times 9}$ and $Y(\xi,\epsilon)\in \mathbb{C}^{9\times 5}$.
    By the computation in Appendix, we have 
    $$
    V(0,0)=0,\qquad W(0,0)=\tilde\mJ.
    $$
    Notice that $W(0,0)$ satisfies $W(0,0)\geq cI$ with a constant $c>0$. It implies that 
    $$
    W(\xi,\epsilon)+W^*(\xi,\epsilon)\geq cI
    $$
    for sufficiently small $\xi$ and $\epsilon$.
    
    Next we focus on the computation of $V(\xi,\epsilon)$.
    We first fix
    $\xi>0$ and consider the computation in Appendix with the varying parameter $\epsilon$. In this case, we have 
    $$
    N(\xi,0) = \begin{pmatrix}
        0 & \\
          & \tilde \mJ
    \end{pmatrix}
    $$
    and therefore obtain $V(\xi,0)=0$. Moreover, the relation \eqref{Vprime} in Appendix shows that
    \begin{equation}\label{prop2-eqV1}
    \frac{\partial V}{\partial\epsilon}(\xi,0)=0.
    \end{equation}
    These two relations imply that 
    $$ V(\xi,\epsilon)=\epsilon^2 \hat{V}(\xi,\epsilon)$$
    with $\hat{V}(\xi,\epsilon)\in \mathbb{C}^{5\times 5}$ being regular with respect to $(\xi,\epsilon)$.
   
    Next we fix $\epsilon>0$ and consider the computation in Appendix with the varying parameter $\xi$. The relation \eqref{V} gives
    $$V(0,\epsilon) = 0.$$ Combining this with \eqref{prop2-eqV1}, we know 
    $$
    V(\xi,\epsilon) = \epsilon^2\xi\bar{V}(\xi,\epsilon) $$ 
    with a regular matrix $\bar{V}(\xi,\epsilon)\in \mathbb{C}^{5\times 5}$.
    Furthermore, we see from \eqref{Vprime}--\eqref{Vprimeprime} that 
    $$
    \frac{\partial V}{\partial\xi}(0,\epsilon)=i\epsilon^2\tilde A_{11},\qquad 
    \frac{\partial^2 V}{\partial\xi^2}(0,\epsilon)=2\epsilon^2\tilde A_{12}\tilde \mJ^{-1}\tilde A_{21},
    $$
    which imply
    $$
    \bar{V}(0,\epsilon)=i\tilde A_{11},\qquad \frac{\partial \bar{V}}{\partial\xi}(0,\epsilon) = \tilde A_{12}\tilde \mJ^{-1}\tilde A_{21}
    $$
    for any small $\epsilon>0$.
    From these two relations, we conclude that 
    $$
    \bar{V}(\xi,\epsilon) = i\tilde A_{11} + \xi \tilde A_{12}\tilde \mJ^{-1}\tilde A_{21} + \xi^2 \tilde V(\xi,\epsilon)
    $$
    with $\tilde V(\xi,\epsilon)$ another regular function of $(\xi,\epsilon)$. 
    
    We know from the Shizuta-Kawashima condition that any eigenvector of matrix $\tilde A_{11}$ is positive definite on the matrix $\tilde A_{12}\tilde \mJ^{-1}\tilde A_{21}$. Then by Lemma 5 in \cite{FS21}, we conclude that there exists a regular transform $T_2=T_2(\xi,\epsilon)\in \mathbb{C}^{5\times 5}$ such that $$
    \left[T_2^{-1}\bar{V}T_2 \right]^*+T_2^{-1}\bar{V}T_2 \geq c\xi I
    $$
    for sufficiently small $\xi$. At last, we conclude
    $$
    T(\xi,\epsilon) =  T_1(\xi,\epsilon)\begin{pmatrix}
        T_2(\xi,\epsilon) & 0\\[1mm]
        0 & I
        \end{pmatrix},\qquad 
        \tilde{N}(\xi,\epsilon)= 
        \begin{pmatrix}
        \epsilon^2\xi ~T_2^{-1}\bar{V}(\xi,\epsilon)T_2 & 0 \\[1mm]
        0 & W(\xi,\epsilon)
        \end{pmatrix}
    $$
    and 
    $$
    \tilde{N}(\xi,\epsilon)+\tilde{N}(\xi,\epsilon)^* \geq c\epsilon^2\xi^2 I.
    $$
    This completes the proof of proposition.
\end{proof}

\subsection{Intermediate $|\xi|$}
\par\smallskip
For $|\xi|$ in the compact interval $[\underline k,\overline k]$, it is equivalent to prove the following assertion.
\begin{prop}\label{prop-intermediate}
    For any $\xi\in [\underline k,~\overline k]$, there exist a $c > 0 $ and a uniformly regular
base transform $T = T(\xi, \epsilon)$ for 
$$
N(\xi,\epsilon)=
\begin{pmatrix}
i\epsilon^2\xi\tilde A_{11}(\bomega)&\ i\epsilon\xi\tilde A_{12}(\bomega)\\[1mm]
\ i\epsilon\xi\tilde A_{21}(\bomega)&\tilde\mJ    
\end{pmatrix},
$$
such that the matrix $\tilde N(\xi,\epsilon)=T^{-1}(\xi, \epsilon)N(\xi, \epsilon)T(\xi, \epsilon)$ satisfies 
$$
\tilde N(\xi, \epsilon) + \tilde N^*(\xi, \epsilon)\geq c\epsilon^2I
$$
for sufficiently small $\epsilon$.
\end{prop}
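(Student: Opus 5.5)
For $\xi$ in the compact interval $[\underline k,\overline k]$ we run the argument of Proposition~\ref{prop2-small} again, now with only $\epsilon$ small. Here $\xi$ and $\bomega$ are parameters ranging over a compact set, and we need uniformity in them.

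\textbf{Step 1: block diagonalization.} The unperturbed matrix $N(\xi,0)=\mathrm{diag}(0,\tilde\mJ)$ has the same $5$ zero and $9$ positive eigenvalues for every $\xi$ and $\bomega$. The spectral gap is independent of $(\xi,\bomega)$, and $N$ depends analytically on $\epsilon$ with coefficients bounded uniformly on $[\underline k,\overline k]\times S^2$. Kato's perturbation theory therefore gives a uniformly regular $T_1(\xi,\epsilon,\bomega)$ of the form $\begin{pmatrix}I&X\\ Y&I\end{pmatrix}$ with $X,Y=O(\epsilon)$. It block-diagonalizes $N$ into $V(\xi,\epsilon)\oplus W(\xi,\epsilon)$ for $\epsilon\le\epsilon_0$, and $\epsilon_0$ does not depend on $\xi\in[\underline k,\overline k]$ or $\bomega$. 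As before, $W(\xi,0)=\tilde\mJ$, so $W+W^*\ge cI$ for small $\epsilon$. The relations \eqref{V}, \eqref{Vprime}, \eqref{Vprimeprime} of the Appendix, used with $\epsilon$ as the perturbation parameter, give
\[
V(\xi,\epsilon)=\epsilon^2\bigl(i\xi\tilde A_{11}+\xi^2\tilde A_{12}\tilde\mJ^{-1}\tilde A_{21}\bigr)+\epsilon^3 \hat V(\xi,\epsilon),
\]
with $\hat V$ regular and bounded uniformly in $(\xi,\bomega)$. Here the $O(\epsilon^2)$ coefficient is the Schur-complement term from the Appendix; the point is that $V$ vanishes to second order in $\epsilon$.

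\textbf{Step 2: dissipation of the leading part.} We need a uniformly bounded $T_2(\xi,\bomega)$ with bounded inverse such that the Hermitian part of $T_2^{-1}P(\xi,\bomega)T_2$ is $\ge cI$, where $P=i\xi\tilde A_{11}+\xi^2\tilde A_{12}\tilde\mJ^{-1}\tilde A_{21}$. This is the main obstacle: $\xi$ is no longer small, so Lemma 5 of \cite{FS21} is not directly applicable in its small-parameter form. I would argue as follows.

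The Shizuta--Kawashima condition says that no eigenvector of $\tilde A_{11}(\bomega)$ lies in the kernel of the positive semidefinite matrix $D=\tilde A_{12}\tilde\mJ^{-1}\tilde A_{21}$. Hence every eigenvalue of $P$ has strictly positive real part. Indeed, if $Pv=\lambda v$ with $|v|=1$, then $\mathrm{Re}\lambda=\xi^2 v^*Dv\ge0$. Equality forces $Dv=0$, hence $\tilde A_{11}v=\lambda/(i\xi)\,v$, which contradicts SK. By continuity and compactness of $[\underline k,\overline k]\times S^2$, $\mathrm{Re}\,\sigma(P)\ge 2c_0>0$ uniformly.

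A uniform transform then comes from a Lyapunov construction instead of eigenvectors, which avoids any trouble with eigenvalue crossings. Set
\[
H(\xi,\bomega)=\int_0^\infty e^{-sP^*}e^{-sP}\,ds .
\]
This matrix is Hermitian, continuous in $(\xi,\bomega)$, and satisfies $HP+P^*H=I$. It is uniformly bounded above and below, because $\|e^{-sP}\|\le Ce^{-c_0 s}$ uniformly by compactness. Take $T_2=H^{-1/2}$. Then
\[
T_2^{-1}PT_2+(T_2^{-1}PT_2)^*=H^{-1/2}(HP+P^*H)H^{-1/2}=H^{-1}\ge cI .
\]

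\textbf{Step 3: conclusion.} Put $T=T_1\,\mathrm{diag}(T_2,I)$. The $V$-block becomes $\epsilon^2\bigl(T_2^{-1}PT_2+O(\epsilon)\bigr)$, whose Hermitian part is $\ge c\epsilon^2$ once $\epsilon_0$ is small. The $W$-block has Hermitian part $\ge cI\ge c\epsilon^2 I$. So $\tilde N+\tilde N^*\ge c\epsilon^2 I$, uniformly for $\xi\in[\underline k,\overline k]$, $\bomega\in S^2$, $\epsilon\le\epsilon_0$. Since $M=\epsilon^{-2}N$ and $\xi^2/(1+\xi^2)$ is bounded above and below on this interval, this gives $(*)$ in this regime.
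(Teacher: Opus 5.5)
Your proposal is correct and follows the paper's argument. You block-diagonalize $N(\xi,\epsilon)$ about $\mathrm{diag}(0,\tilde\mJ)$, use the Appendix to get $V=\epsilon^2 P+O(\epsilon^3)$ with $P=i\xi\tilde A_{11}+\xi^2\tilde A_{12}\tilde\mJ^{-1}\tilde A_{21}$, deduce $\operatorname{Re}\sigma(P)>0$ from Shizuta--Kawashima, and make the Hermitian part of $P$ positive by a uniformly bounded similarity. Where the paper reduces to fixed $\xi$ and cites Lemma~\ref{lemma2} (stated there without proof), your Lyapunov construction $H=\int_0^\infty e^{-sP^*}e^{-sP}\,ds$, $T_2=H^{-1/2}$, actually proves that lemma with continuous $T_2$ and makes the uniformity in $(\xi,\bomega)$ explicit.
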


 Since $[\underline k,~\overline k]$ is a compact set, it suffices to prove the proposition for fixed $\xi$.
    At the point $\epsilon=0$, we have 
    $$
    N(\xi,0)=\begin{pmatrix}
        0 & \\
          & \tilde \mJ
    \end{pmatrix}.
    $$
    Thus there exists a regular transform $T_1=T_1(\epsilon)$ such that 
    $$
    T_1^{-1}(\epsilon)N(\xi, \epsilon)T_1(\epsilon) = \begin{pmatrix}
        V(\epsilon) & \\[1mm]
          & W(\epsilon)
    \end{pmatrix}\quad \text{with}\quad 
    T_1(\epsilon) = 
    \begin{pmatrix}
        I  & X(\epsilon)\\[1mm]
        Y(\epsilon)  & I
    \end{pmatrix}.
    $$
    We use the computation in Appendix with the parameter $\epsilon$. In this case, we have $$W(0)=\tilde \mJ>0,\qquad V(0)=V'(0)=0$$ and 
    $$
    V''(0)=2(i\xi \tilde A_{11} + \xi^2 \tilde A_{12}\tilde \mJ^{-1}\tilde A_{21} ).
    $$
    It suffices to show that there exists a transform $T_2\in\mathbb{C}^{5\times 5}$ such that 
    $$
    T_2^{-1}V''(0)T_2 + \Big(T_2^{-1}V''(0)T_2\Big)^*\geq cI.$$
    By the Shizuta-Kawashima condition, we know that the matrix $i\xi \tilde A_{11} + \xi^2 \tilde A_{12}\tilde \mJ^{-1}\tilde A_{21}$ has eigenvalues with strictly positive real parts. Consequently, Proposition \ref{prop-intermediate} follows directly from the following lemma:
\begin{lemma}\label{lemma2}
Let $P\subset \R^N$ be compact and $K:P\to M_n(\C)$ be continuous such that all eigenvalues of all $K(p)$, $p\in P$, 
have positive real part.
Then there exist a constant $c>0$ and a map $T:P\to GL(n,\C)$ such that $\{T(p):p\in P\}$ and 
$\{T(p)^{-1}:p\in P\}$ are bounded and
$$
\tilde K(p)=T(p)^{-1}K(p)T(p)
$$
satisfies 
$$
\tilde K(p)+\tilde K(p)^*\ge cI\quad \text{for all } p\in P.
$$
\end{lemma}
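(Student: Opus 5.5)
The plan is a pointwise construction via the Lyapunov equation, which also handles the Jordan-block case at once, followed by a compactness argument to pass from one point to all of $P$.

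\emph{Step 1 (a fixed matrix).} Fix $p_0\in P$ and write $K_0=K(p_0)$. Since $\sigma(K_0)\subset\{\operatorname{Re}\lambda>0\}$, the integral
$$
H_0=\int_0^\infty e^{-sK_0^*}e^{-sK_0}\,ds
$$
converges. It defines a Hermitian positive definite matrix with $K_0^*H_0+H_0K_0=I$; this is the standard Lyapunov argument.

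Set $T_0=H_0^{-1/2}$ and $\tilde K_0=T_0^{-1}K_0T_0=H_0^{1/2}K_0H_0^{-1/2}$. Then
$$
\tilde K_0+\tilde K_0^*=H_0^{-1/2}\bigl(H_0K_0+K_0^*H_0\bigr)H_0^{-1/2}=H_0^{-1}\ge \|H_0\|^{-1}I .
$$

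\emph{Step 2 (a neighborhood).} Take the same constant transformation $T_0$ for all $p$ near $p_0$. By continuity of $K$,
$$
T_0^{-1}K(p)T_0+\bigl(T_0^{-1}K(p)T_0\bigr)^*
$$
differs from $H_0^{-1}$ by at most $2\|T_0\|\,\|T_0^{-1}\|\,\|K(p)-K_0\|$. On an open neighborhood $U_{p_0}$ this error is below $\tfrac12\|H_0\|^{-1}$, so the left-hand side is $\ge \tfrac12\|H_0\|^{-1}I$ there.

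\emph{Step 3 (compactness).} Cover $P$ by finitely many neighborhoods $U_{p_1},\dots,U_{p_m}$. For each $p$, choose $T(p)=T_{p_k}$ with $k$ the smallest index such that $p\in U_{p_k}$. Then $\{T(p)\}$ and $\{T(p)^{-1}\}$ range over finitely many matrices and are bounded. The estimate holds with
$$
c=\tfrac12\min_k\|H_{p_k}\|^{-1}>0 .
$$
The lemma only asks for a map $T$, not a continuous one, so this piecewise-constant choice suffices.

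\emph{Main obstacle and an alternative.} The only delicate point is Step 1, producing a Hermitian form adapted to $K_0$ when $K_0$ may not be diagonalizable. The Lyapunov integral handles this without Jordan forms. If one preferred to avoid it, one could instead take the Jordan form $J=S^{-1}K_0S$ and rescale by $D_\delta=\operatorname{diag}(1,\delta,\delta^2,\dots)$. This makes the off-diagonal entries $\delta$-small, and for $\delta<\min\operatorname{Re}\lambda$ it gives
$$
\tilde K_0+\tilde K_0^*\ge(\min\operatorname{Re}\lambda-\delta)I .
$$
If a continuous $T$ were wanted, one could note that $p\mapsto H(p)$ solving $K(p)^*H+HK(p)=I$ is itself continuous, since the Lyapunov operator $H\mapsto K^*H+HK$ is invertible whenever no two eigenvalues satisfy $\lambda+\bar\mu=0$. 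One would then take $T(p)=H(p)^{-1/2}$ globally, with bounds following from compactness.
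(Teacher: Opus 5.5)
Your proof is correct and complete. The Lyapunov integral gives, for each fixed $p_0$, a Hermitian $H_0>0$ with $K_0^*H_0+H_0K_0=I$. Conjugating by $T_0=H_0^{-1/2}$ yields $\tilde K_0+\tilde K_0^*=H_0^{-1}\ge\|H_0\|^{-1}I$. A constant $T_0$ on a neighbourhood, together with a finite subcover, then gives the uniform constant.

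The paper does not prove Lemma~\ref{lemma2}. It states it as a standard fact and uses it immediately in the intermediate-frequency regime, so there is no competing route to compare against; your argument supplies the missing justification. Your Step~1 is also exactly the single-matrix fact the paper invokes without comment in the proof of Lemma~\ref{lemmaG}, namely the existence of $S$ with $S^{-1}\Lambda(0)S+(S^{-1}\Lambda(0)S)^*\ge cI$.

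Your piecewise-constant $T$ is sufficient for the paper's purposes. In the proof of Theorem~1, ``uniformly regular'' only means that the transformation and its inverse are bounded. The energy inequality for $\tilde Z=R^{-1}Z$ is applied at each fixed $\bxi$, with $R$ independent of $t$, so no continuity or measurability in the frequency variable is needed.

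Your continuous variant is still the cleaner statement and costs nothing extra. Since $\operatorname{Re}(\bar\lambda+\mu)>0$ for all eigenvalues $\lambda,\mu$ of $K(p)$, the solution $H(p)$ of $K(p)^*H+HK(p)=I$ depends continuously on $p$. It is positive definite by the integral formula. Then $T(p)=H(p)^{-1/2}$ is a continuous transformation with the explicit constant $c=(\max_P\|H(p)\|)^{-1}$.

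A minor point: in your Jordan-form alternative the bound you actually obtain is $2(\min\operatorname{Re}\lambda-\delta)$. The bound you state is a valid weakening of it.
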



\subsection{Large $|\xi|$}
\par\smallskip
For $\xi\geq \overline k$, we take $\eta=1/\xi$ and consider the matrix 
$$
H(\eta,\epsilon)=\epsilon^2\eta M(\xi,\epsilon)=
\begin{pmatrix}
i\epsilon^2\tilde A_{11}(\bomega)&\ i\epsilon\tilde A_{12}(\bomega)\\[1mm]
\ i\epsilon\tilde A_{21}(\bomega)&\eta \tilde\mJ    
\end{pmatrix}.
$$
The inequality $(*)$ in Proposition 1 follows from the following result.
\begin{prop}
There exist a constant $c > 0$ and a uniformly regular base transform $T = T(\eta,\epsilon)$ 
such that the matrix $\tilde H(\eta,\epsilon)=T^{-1}(\eta,\epsilon)H(\eta,\epsilon)T(\eta,\epsilon)$ satisfies 
$$
\tilde H(\eta,\epsilon) + \tilde H^*(\eta,\epsilon) \geq c\epsilon^2\eta I
$$
for sufficiently small $\epsilon$ and $\eta$.
\end{prop}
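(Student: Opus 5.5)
We propose to handle the two small parameters $\epsilon$ and $\eta$ through a polar-type parametrization. Write $\epsilon = r\cos\phi$ and $\eta = r^2\sin\phi$ with $r\ge0$ small and $\phi\in[0,\pi/2]$. Equivalently, compare $\eta$ with $\epsilon^2$ through the ratio $s=\eta/\epsilon^2$. The matrix $H$ has two competing effects. The off-diagonal coupling is $O(\epsilon)$. The relaxation block $\eta\tilde\mJ$ is $O(\eta)$. The hyperbolic part $i\epsilon^2\tilde A_{11}$ is $O(\epsilon^2)$. We therefore split into two regions and construct a separate block diagonalization in each.

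\emph{Region I: $\eta\ge\delta\epsilon$ for a small fixed $\delta>0$.} Here the relaxation block dominates the coupling, so we rescale by $\eta^{-1}$:
\[
\eta^{-1}H=\begin{pmatrix} i(\epsilon^2/\eta)\tilde A_{11} & i(\epsilon/\eta)\tilde A_{12}\\ i(\epsilon/\eta)\tilde A_{21} & \tilde\mJ\end{pmatrix}.
\]
Setting $\mu=\epsilon/\eta\le \delta^{-1}$, this matrix is $N$ from Propositions 2 and 3 with $\epsilon$ replaced by $\mu$ and $\xi=1$, up to the factor $\eta$. More precisely, $\eta^{-1}H=\mu^{2}\cdot(\text{matrix }M)$ at frequency $1$ with relaxation $\mu$.

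\begin{itemize}
\item When $\mu$ is small, the argument of Proposition 2 applies. Perturbation theory (Kato) separates the equilibrium block $V=\mu^2(i\tilde A_{11}+\tilde A_{12}\tilde\mJ^{-1}\tilde A_{21})+O(\mu^3)$ from a dissipative block $W\approx\tilde\mJ$. The Shizuta--Kawashima condition together with Lemma 5 of \cite{FS21} then gives a lower bound $c\mu^2$, and hence $\ge c\epsilon^2/\eta\cdot$ on $H$. Since $\eta\le1$ this is $\ge c\epsilon^2\eta$, and the $W$ block contributes $c\eta\ge c\epsilon^2\eta$.
\item When $\mu$ lies in a compact range, the bound should follow from Lemma \ref{lemma2}. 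This needs the fact that $\mu^2 M$ at frequency $1$ has spectrum in the open right half-plane, which is standard dissipativity under the Shizuta--Kawashima condition.
\end{itemize}

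\emph{Region II: $\eta\le\delta\epsilon$.} Here $\eta\tilde\mJ$ is a perturbation of the skew-Hermitian part. We rescale by $\epsilon^{-1}$:
\[
\epsilon^{-1}H=i\begin{pmatrix}\epsilon\tilde A_{11}&\tilde A_{12}\\ \tilde A_{21}&0\end{pmatrix}+\frac{\eta}{\epsilon}\begin{pmatrix}0&0\\0&\tilde\mJ\end{pmatrix}=:iG_0+iG_1+\nu D,
\]
with $\nu=\eta/\epsilon$ small. The unperturbed matrix $iG_0$ is the principal symbol of the relaxation-free hyperbolic system with $\tilde A_{11}$ dropped. It is Hermitian up to the factor $i$, so it is unitarily diagonalizable. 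We first block diagonalize $iG_0$ according to its distinct eigenvalues. This includes the kernel of $\begin{pmatrix}0&\tilde A_{12}\\ \tilde A_{21}&0\end{pmatrix}$, which is nontrivial because the block sizes are $5$ and $9$. The perturbation is $i\epsilon\tilde A_{11}+\nu D$.

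On each spectral block with projection $P_k$, first-order perturbation gives the Hermitian part $\nu P_kDP_k$. We need $P_kDP_k>0$ on each eigenspace. This is the hyperbolic analogue of the Shizuta--Kawashima condition, namely that no eigenvector of the principal part lies in the kernel of the dissipation. On the kernel block, $\epsilon\tilde A_{11}$ is of comparable or larger size and must be diagonalized at second level. We expect a further sub-splitting, comparing $\epsilon$ with $\nu$ through the polar angle, followed by an application of Lemma \ref{lemma2} in the angle variable.

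If these positivity conditions hold, we obtain Hermitian part $\ge c\nu$, that is $\epsilon^{-1}(\tilde H+\tilde H^*)\ge c\eta/\epsilon$. This gives $\ge c\eta\ge c\epsilon^2\eta$.

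\emph{Main obstacle.} Region II, specifically the kernel of the off-diagonal coupling. Eigenvectors $(v,0)$ with $\tilde A_{21}v=0$, for example entropy- or vorticity-type modes, see no dissipation from $D$ at first order. For these modes the dissipation must be produced at second order, via $\tilde A_{11}$ coupling them to dissipated modes. This yields a rate like $\epsilon^2\eta$ rather than $\eta$, which is exactly the weaker bound claimed.

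I would first compute these reduced blocks explicitly through the Appendix formulas \eqref{Vprime}--\eqref{Vprimeprime}, treating $(\epsilon,\nu)$ as the perturbation parameters. I would then check positivity using the Shizuta--Kawashima condition. Uniformity across the interface $\eta\sim\delta\epsilon$ follows from overlapping the regions and using compactness of the angle variable. Finally, the case $\epsilon$ small with $\eta$ fixed is covered by Region I.
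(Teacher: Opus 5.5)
\textbf{Region I contains a computational error that hides the degenerate mode.} Since $\epsilon^2/\eta=\mu\epsilon=\mu^2\eta$, the rescaled matrix is actually
\[
\eta^{-1}H=\begin{pmatrix} i\mu^2\eta\,\tilde A_{11} & i\mu\tilde A_{12}\\ i\mu\tilde A_{21} & \tilde\mJ\end{pmatrix}.
\]
This is not $N(1,\mu)=\mu^2M(1,\mu)$: the hyperbolic block carries an extra small factor $\eta$. It is exactly the paper's $K(\rho,\bar\epsilon)$ with $\rho=\eta$, $\bar\epsilon=\mu$.

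The reduced $5\times5$ block is therefore $\mu^2\bigl(\tilde A_{12}\tilde\mJ^{-1}\tilde A_{21}+i\eta\tilde A_{11}\bigr)+\dots$. This is the reverse of the regime of Proposition \ref{prop2-small}, where the dissipative term is the small one, so Lemma 5 of \cite{FS21} does not apply. At $\eta=0$ this block has a zero eigenvalue with eigenvector $\alpha_1\in\ker\tilde A_{21}$. So the mode you single out as the obstacle in Region II is degenerate in Region I too.

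Its eigenvalue is $\mu^2\eta^2(\kappa+o(1))$ with $\kappa=b^TQ^{-1}b$, where $b=(\alpha_1^\perp)^T\tilde A_{11}\alpha_1$ and $Q=(\alpha_1^\perp)^T\tilde A_{12}\tilde\mJ^{-1}\tilde A_{21}\alpha_1^\perp$. Hence $H$ has an eigenvalue with real part of order $\epsilon^2\eta$. Real parts of eigenvalues are similarity invariant, so:
\begin{itemize}
\item for small $\mu$, no choice of $T$ can give your bound $c\epsilon^2/\eta$;
\item for $\mu$ in a compact range, Lemma \ref{lemma2} cannot be applied, because the closure of the parameter set contains $\eta=0$, where the spectrum touches $0$. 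Its conclusion $c\eta$ would in fact be false, since the true real part is of order $\eta^3$.
\end{itemize}
What you need here is a second-order argument. Split off this simple eigenvalue and show that it and its first $\eta$-derivative vanish, using $\alpha_1^T\tilde A_{11}\alpha_1=0$. Then show its second derivative $2\kappa$ is positive; this needs $b\neq0$, i.e.\ $\tilde A_{11}\alpha_1\neq0$, which is the Shizuta--Kawashima condition. The paper does exactly this in Lemma \ref{lemmaK} (your small $\mu$) and Lemma \ref{lemmaG} (your compact range of $\eta/\epsilon$).

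\textbf{Region II identifies the right difficulty but does not resolve it.} Region II is the paper's $L(\rho,\bar\eta)$ with $\rho=\epsilon$, $\bar\eta=\nu$. Your proposed sub-splitting with Lemma \ref{lemma2} in the angle variable cannot work as stated. The critical real part degenerates like $\epsilon^2\nu$ along every ray, while Lemma \ref{lemma2} requires a uniform spectral gap.

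The missing idea is structural. For every $\epsilon>0$, the kernel of the skew-Hermitian part $i\begin{pmatrix}\epsilon\tilde A_{11}&\tilde A_{12}\\ \tilde A_{21}&0\end{pmatrix}$ remains exactly six-dimensional. It is spanned by the vectors $(0,\beta_k)$ with $\beta_k\in\ker\tilde A_{12}$, together with $(\alpha_1,\epsilon\beta_1)$, where $\tilde A_{12}\beta_1=-\tilde A_{11}\alpha_1$. This equation is solvable because $\alpha_1^T\tilde A_{11}\alpha_1=0$, and $\beta_1\neq0$ by Shizuta--Kawashima.

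Consequently the six-dimensional block vanishes identically at $\nu=0$, so it equals $\nu\bar V_0$ with $\bar V_0(\epsilon,0)=R^*\mathrm{diag}(0,\tilde\mJ)R\ge c\epsilon^2$. Add the factorization $d=\epsilon^2\hat d$ of the critical eigenvalue of $\bar V_0$, which follows from $d(0,\nu)=0$ and $\alpha_1^T\tilde A_{11}\alpha_1=0$. Together these give the uniform rate $\epsilon^2\nu$ for $\epsilon^{-1}H$, i.e.\ $\epsilon^2\eta$ for $H$.

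Your partition by the ratio $\eta/\epsilon$ matches the paper's $\Omega_1,\Omega_2,\Omega_3$. What is missing is a correct treatment of the single degenerate mode $(\alpha_1,0)$ in each of the three regimes.
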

By introducing the parameterization $\eta=\rho \bar{\eta}$ and $\epsilon=\rho \bar{\epsilon}$, we know that it suffices to prove the result 
for all small $\rho>0$ and 
$$
(\bar{\eta},\bar{\epsilon})\in \Omega=\{\bar{\eta}=1,~~0<\bar{\epsilon}\leq 1\} \cup \{\bar{\epsilon}=1,~~0<\bar{\eta}\leq 1\} .
$$
Next we partition $\Omega$ into three parts, i.e.
$\Omega=\Omega_1\cup\Omega_2\cup\Omega_3$ where 
$$
\Omega_1 = \{\bar{\eta}=1,~~0<\bar{\epsilon}\leq c_1\},\quad \Omega_2=
\{\bar{\epsilon}=1,~~0< \bar{\eta}\leq c_2\} 
$$
with $c_1,c_2$ being sufficiently small positive constants. Moreover, the set $\Omega_3$ consists of the remaining points in $\Omega$. The set $\Omega=\Omega_1\cup\Omega_2\cup\Omega_3$ is illustrated in the figure below.
$$
\begin{tikzpicture}[scale=1]
\draw[->] (-0.5,0) -- (2.5,0) node[right] {$\bar{\epsilon}$};
\draw[->] (0,-0.5) -- (0,2.5) node[above] {$\bar{\eta}$};
\draw[thick] (2,0) -- (2,2);
\draw[thick] (0,2) -- (2,2);
\node[left] at (0,2) {$ 1 $};
\node[below] at (2,0) {$ 1 $};
\node[below left] at (0,0) {$0$};
\node at (0.05,2) {$($};
\node at (0.5,2) {$]$};
\node[below right] at (0,1.8) {$\Omega_1$};
\node[rotate=90] at (2,0.05) {$($};
\node[rotate=90] at (2,0.5) {$]$};
\node[above left] at (1.8,0) {$\Omega_2$};
\node[above right] at (1.9,1.9) {$\Omega_3$};
\end{tikzpicture}
$$
For these three parts, we prove the following three lemmas respectively.
\begin{lemma}\label{lemmaK}
For the set $~\Omega_1 = \{\bar{\eta}=1,~~0<\bar{\epsilon}\leq c_1\}$, we consider the matrix 
$$
H(\rho,\rho\bar{\epsilon})=\rho
\begin{pmatrix}
i\rho\bar{\epsilon}^2\tilde A_{11} &\ i\bar{\epsilon}\tilde A_{12} \\[1mm]
\ i\bar{\epsilon}\tilde A_{21} & \tilde\mJ    
\end{pmatrix}:=\rho K(\rho,\bar{\epsilon}).
$$    
There exist a constant $c > 0$ and a uniformly regular base transform $T = T(\rho,\bar{\epsilon})$ 
such that the matrix $\tilde K(\rho,\bar{\epsilon}) =T^{-1}(\rho,\bar{\epsilon})K(\rho,\bar{\epsilon})T(\rho,\bar{\epsilon})$ satisfies 
$$
\tilde K(\rho,\bar{\epsilon}) + \tilde K^*(\rho,\bar{\epsilon}) \geq c\rho^2\bar{\epsilon}^2 I
$$
for sufficiently small $\rho$ and $\bar{\epsilon}$.
\end{lemma}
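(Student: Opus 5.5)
We would follow the same two-step scheme as in the proof of Proposition \ref{prop2-small}: block-diagonalize by perturbation theory, then handle the equilibrium block with the Shizuta--Kawashima structure. At $(\rho,\bar\epsilon)=(0,0)$ the matrix is
$$
K(0,0)=\begin{pmatrix}0&0\\0&\tilde\mJ\end{pmatrix},
$$
with a $5$-fold zero eigenvalue and $9$ positive eigenvalues. By \cite{Kato-book}, for $\rho,\bar\epsilon$ small there is a regular transform $T_1(\rho,\bar\epsilon)=\begin{pmatrix}I&X\\Y&I\end{pmatrix}$ such that
$$
T_1^{-1}KT_1=\mathrm{diag}\bigl(V(\rho,\bar\epsilon),W(\rho,\bar\epsilon)\bigr).
$$
Here $W(0,0)=\tilde\mJ>0$, so $W+W^*\ge cI$ nearby. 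Since $c\rho^2\bar\epsilon^2\le cI$ for small parameters, the $W$-block is harmless.

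For the $V$-block we would use the Appendix relations with $\bar\epsilon$ as the perturbation parameter at fixed $\rho$. The off-diagonal blocks are $i\bar\epsilon\tilde A_{12}$ and $i\bar\epsilon\tilde A_{21}$, and the upper-left block is $i\rho\bar\epsilon^2\tilde A_{11}$. So $K(\rho,0)=K(0,0)$, which gives $V(\rho,0)=0$. By \eqref{Vprime}, $\partial_{\bar\epsilon}V(\rho,0)=0$ because the upper-left block vanishes to first order. The second derivative from \eqref{Vprimeprime} should be
$$
\partial^2_{\bar\epsilon}V(\rho,0)=2\bigl(i\rho\tilde A_{11}+\tilde A_{12}\tilde\mJ^{-1}\tilde A_{21}\bigr).
$$
The second term arises as $-(i\tilde A_{12})\tilde\mJ^{-1}(i\tilde A_{21})$. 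Hence
$$
V(\rho,\bar\epsilon)=\bar\epsilon^2\bigl(i\rho\tilde A_{11}+\tilde A_{12}\tilde\mJ^{-1}\tilde A_{21}+\bar\epsilon\,\hat V(\rho,\bar\epsilon)\bigr),
$$
with $\hat V$ regular.

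The third-order remainder $\bar\epsilon\hat V$ is the main obstacle. We need a lower bound of order $\rho^2$, but the remainder is only $O(\bar\epsilon)$, and $\bar\epsilon$ need not be small relative to $\rho$. We would resolve this by exploiting the symmetry $\bar\epsilon\mapsto-\bar\epsilon$. The matrix $K(\rho,-\bar\epsilon)$ is conjugate to $K(\rho,\bar\epsilon)$ by $\mathrm{diag}(I,-I)$, and this conjugation preserves the block structure. Therefore $V$ is even in $\bar\epsilon$, so $\hat V=\bar\epsilon\check V(\rho,\bar\epsilon^2)$. Moreover, at $\rho=0$ every Appendix term contains products of the off-diagonal blocks only. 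So $V(0,\bar\epsilon)$ comes from the purely ``relaxation'' part $\begin{pmatrix}0&i\bar\epsilon\tilde A_{12}\\ i\bar\epsilon\tilde A_{21}&\tilde\mJ\end{pmatrix}$. We expect this part to be Hermitian up to the real structure, and its zero-block is positive semidefinite.

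Consequently the bracket takes the form
$$
D(\rho,\bar\epsilon)=i\rho\tilde A_{11}+\tilde A_{12}\tilde\mJ^{-1}\tilde A_{21}+\bar\epsilon^2 E(\bar\epsilon)+\rho\bar\epsilon^2F(\rho,\bar\epsilon),
$$
with $E$ Hermitian and $F$ regular. The part $S(\bar\epsilon)=\tilde A_{12}\tilde\mJ^{-1}\tilde A_{21}+\bar\epsilon^2E$ stays positive semidefinite with the same kernel structure. One way to see this is that the whole $V(0,\bar\epsilon)$ is the restriction of a dissipative symmetric matrix.

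We would then conclude with Lemma 5 of \cite{FS21}, applied as in Proposition \ref{prop2-small}, but now with $\rho$ as the small parameter. Its hypothesis is that $S$ is positive definite on each eigenvector of $\tilde A_{11}$, which is the Shizuta--Kawashima condition. The lemma supplies a regular $T_2(\rho,\bar\epsilon)$ with
$$
(T_2^{-1}DT_2)+(T_2^{-1}DT_2)^*\ge c\rho^2 I
$$
for small $\rho$. The scale $\rho^2$ arises because the imaginary skew part $i\rho\tilde A_{11}$ competes with the semidefinite $S$. The perturbation $\rho\bar\epsilon^2F$ must also be absorbed, which requires $\bar\epsilon\le c_1$ small; this is where the choice of $c_1$ enters. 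Setting $T=T_1\,\mathrm{diag}(T_2,I)$ then gives
$$
\tilde K+\tilde K^*\ge c\rho^2\bar\epsilon^2 I.
$$
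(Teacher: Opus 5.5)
\textbf{There is a genuine gap in your last step.}

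Your first two steps agree with the paper's: the splitting of $K$ into $V\oplus W$, and $V=\bar\epsilon^2D$ with $D(\rho,0)=i\rho\tilde A_{11}+\tilde A_{12}\tilde\mJ^{-1}\tilde A_{21}$. You also correctly locate the difficulty in the regime where $\rho$ is small relative to $\bar\epsilon$. But your last step does not resolve it.

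You need a lower bound $c\rho^2$ for $D$. The term $\rho\bar\epsilon^2F$ has size $\rho\bar\epsilon^2$, which exceeds $\rho^2$ whenever $\rho<\bar\epsilon^2$. Taking $c_1$ small bounds $\bar\epsilon$, not the ratio $\bar\epsilon^2/\rho$. To be precise, let $\alpha_1$ be the unit vector spanning $\ker\tilde A_{21}$, and set $k=(\alpha_1^\perp)^T\tilde A_{11}\alpha_1$ and $S_{22}=(\alpha_1^\perp)^T\tilde A_{12}\tilde\mJ^{-1}\tilde A_{21}\alpha_1^\perp$. Then the eigenvalue of $D$ near $0$ is
\[
\rho\,\bar\epsilon^2\,\alpha_1^TF(0,\bar\epsilon)\alpha_1+\rho^2\bigl(k^TS_{22}^{-1}k+O(\bar\epsilon^2)+O(\rho)\bigr).
\]
Every other part of $F$ can indeed be absorbed, but the single entry $\alpha_1^TF(0,\bar\epsilon)\alpha_1$ cannot, and your proposal says nothing about it. No general principle rescues this. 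The matrix $\begin{pmatrix}-\rho\bar\epsilon^2&i\rho\\ i\rho&1\end{pmatrix}$ has exactly your structure: a semidefinite part whose one-dimensional kernel contains no eigenvector of the skew part, plus a $\rho\bar\epsilon^2$ perturbation. Yet it has a negative eigenvalue for $\rho<\bar\epsilon^2$.

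Two related points:
\begin{itemize}
\item Lemma 5 of \cite{FS21}, in the form used in Proposition 2, does not give you $c\rho^2$. It treats the skew-dominant form $i\tilde A_{11}+\xi S+O(\xi^2)$ and yields $c\xi$. Your $D=S+i\rho\tilde A_{11}+\dots$ is dissipation-dominant: $D/\rho=i\tilde A_{11}+\rho^{-1}S$ with $\rho^{-1}$ large.
\item The parity argument in $\bar\epsilon$ is correct but does not help, since the remaining $O(\bar\epsilon^2)$ terms are still not small compared with $\rho^2$.
\end{itemize}

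\textbf{The missing idea is the exact kernel of $K(0,\bar\epsilon)$.} The vector $(\alpha_1,0)$ is a right and left null vector for every $\bar\epsilon$, and $0$ is a simple eigenvalue for $\bar\epsilon>0$. In Step 3 the paper uses this as follows:
\begin{itemize}
\item It splits off the positive $4\times4$ block of $\hat V(0,0)=\tilde A_{12}\tilde\mJ^{-1}\tilde A_{21}$, leaving a scalar $V_0$ with $V_0(0,\bar\epsilon)=0$ for all small $\bar\epsilon$.
\item First-order eigenvalue perturbation with $l=(\alpha_1^T,0)$ and $r=(\alpha_1,0)$ gives $\partial_\rho(\bar\epsilon^2V_0)(0,\bar\epsilon)=l\,\partial_\rho K(0,\bar\epsilon)\,r=i\bar\epsilon^2\alpha_1^T\tilde A_{11}\alpha_1=0$.
\item Hence $V_0=\rho^2\bar V_0$, with the factor $\rho^2$ uniform in $\bar\epsilon$. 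In your notation this is exactly $\alpha_1^TF(0,\bar\epsilon)\alpha_1=0$.
\item Finally $\bar V_0(0,0)=k^TS_{22}^{-1}k>0$, because $\alpha_1$ is not an eigenvector of $\tilde A_{11}$ (Shizuta--Kawashima), and continuity finishes the argument.
\end{itemize}

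A route closer to your dissipativity heuristic also works. $K(\rho,\bar\epsilon)$ is accretive for real $\rho$ of either sign, so $V$ is accretive in the inner product given by $I+Y^*Y$. This forces $\operatorname{Re}\,\alpha_1^TF(0,\bar\epsilon)\alpha_1=0$, which is enough for the lower bound.

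The same null vector is also what rigorously gives the kernel structure of $S(\bar\epsilon)$ that you only "expect" at $\rho=0$. Your assertion that $E$ is Hermitian does not follow from your argument, since $T_1$ is not unitary and $K(0,\bar\epsilon)$ is only complex symmetric.
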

\begin{lemma}\label{lemmaL}
For the set $\Omega_2=
\{\bar{\epsilon}=1,~~0< \bar{\eta}\leq c_2\}$, we consider the matrix 
$$
H(\rho\bar{\eta},\rho)=\rho
\begin{pmatrix}
i\rho \tilde A_{11} &\ i \tilde A_{12} \\[1mm]
\ i \tilde A_{21} &\bar{\eta} \tilde\mJ    
\end{pmatrix}:=\rho L(\rho,\bar{\eta}).
$$    
There exist a constant $c > 0$ and a uniformly regular base transform $T = T(\rho,\bar{\eta})$ 
such that the matrix $\tilde L(\rho,\bar{\eta}) =T^{-1}(\rho,\bar{\eta})L(\rho,\bar{\eta})T(\rho,\bar{\eta})$ satisfies 
$$
\tilde L(\rho,\bar{\eta}) + \tilde L^*(\rho,\bar{\eta}) \geq c\rho^2\bar{\eta} I
$$
for sufficiently small $\rho$ and $\bar{\eta}$.
\end{lemma}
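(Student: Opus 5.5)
We need to prove Lemma \ref{lemmaL}. The matrix is
$$
L(\rho,\bar\eta)=\begin{pmatrix} i\rho\tilde A_{11} & i\tilde A_{12}\\ i\tilde A_{21} & \bar\eta\tilde\mJ\end{pmatrix}.
$$
At $(\rho,\bar\eta)=(0,0)$ we have
$$
L(0,0)=i\begin{pmatrix}0&\tilde A_{12}\\ \tilde A_{21}&0\end{pmatrix}=:iS_0 ,
$$
where $S_0$ is Hermitian because $\tilde A_{21}=\tilde A_{12}^*$ by the symmetry of $A^j$. So $L(0,0)$ is skew-Hermitian and diagonalizable by a unitary matrix. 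The perturbation $L(\rho,\bar\eta)-L(0,0)=i\rho\,\mathrm{diag}(\tilde A_{11},0)+\bar\eta\,\mathrm{diag}(0,\tilde\mJ)$ is linear in two small parameters. The target bound $c\rho^2\bar\eta$ tells us where the dissipation comes from: a first order gain $\bar\eta$ from $\tilde\mJ$ on modes with nonzero $W$-component, and a gain only at order $\rho^2\bar\eta$ on the kernel of $S_0$.

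\textbf{Step 1: reduction along the eigenspaces of $S_0$.} Let $E_0=\ker S_0$ and $E_\pm$ the spectral subspaces of the nonzero eigenvalues $\mu_k\neq0$ of $S_0$. By Kato's perturbation theory, with two parameters, as in Proposition \ref{prop2-small}, we obtain a regular transform $T_1(\rho,\bar\eta)=U(I+O(\rho+\bar\eta))$, $U$ unitary. It block-diagonalizes $L$ into blocks $L_k(\rho,\bar\eta)$, one for each distinct eigenvalue $i\mu_k$ of $S_0$, including $\mu=0$. For a block with $\mu_k\ne0$ we expand $L_k=i\mu_k I+i\rho P_k\tilde A_{11}' P_k+\bar\eta P_k\,\mathrm{diag}(0,\tilde\mJ)P_k+O((\rho+\bar\eta)^2)$.

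We then need $P_k\,\mathrm{diag}(0,\tilde\mJ)P_k>0$ on $E_k$. An eigenvector $(v,w)$ of $S_0$ with $\mu\ne0$ satisfies $\tilde A_{12}w=\mu v$ and $\tilde A_{21}v=\mu w$, so $w\ne0$. Hence the Hermitian part of this block is $\ge c\bar\eta$. The remainder, however, is $O(\rho^2)$, which is not dominated by $\bar\eta$ when $\bar\eta\ll\rho^2$. To fix this, I would first remove the $\rho$-dependence exactly. Set $\bar\eta=0$ and use the fact that $L(\rho,0)=i(\rho\,\mathrm{diag}(\tilde A_{11},0)+S_0)$ is $i$ times a Hermitian matrix. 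It is therefore unitarily block-diagonalizable: choose $T_1$ unitary in $\rho$ at $\bar\eta=0$, so that the $\rho$-only terms are skew-Hermitian to all orders. Then the Hermitian part of each block is $\bar\eta\,P_k(\rho)\,\mathrm{diag}(0,\tilde\mJ)P_k(\rho)+O(\bar\eta(\rho+\bar\eta))\ge c\bar\eta$, and this is $\ge c\rho^2\bar\eta$.

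\textbf{Step 2: the kernel block.} The eigenvalue $0$ of $S_0$ carries vectors $(v,w)$ with $\tilde A_{12}w=0$ and $\tilde A_{21}v=0$. On this block, with the $\rho$-unitary choice above, the block has the form $iH(\rho)+\bar\eta\,Q(\rho)+O(\bar\eta^2)$, where $H(\rho)$ is Hermitian and $Q(\rho)=P_0(\rho)\,\mathrm{diag}(0,\tilde\mJ)P_0(\rho)\ge0$. Here $P_0(\rho)$ is the orthogonal projection onto the perturbed kernel group. Expanding $P_0(\rho)$ to second order, its range picks up $W$-components of order $\rho$. These come from coupling through $\tilde A_{12}$ and $\tilde A_{21}$ acting on $\tilde A_{11}$-components, in the same way as in the small-$\xi$ computation. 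This yields $Q(\rho)\ge c\rho^2 I$ on the $V$-part of $E_0$.

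This positivity is exactly where the Shizuta--Kawashima condition enters. Vectors $v$ in $\ker\tilde A_{21}$ that are also eigenvectors of $\tilde A_{11}$ do not exist, so the second order correction is definite. To make it definite on the whole block rather than only after averaging, I would further split $E_0$ according to the eigenvalues of $P_0\tilde A_{11}P_0$. On each sub-block I would then invoke Lemma 5 of \cite{FS21}, in the same way as was done for $\bar V$ in Proposition \ref{prop2-small}, to get a regular $T_2$ with Hermitian part $\ge c\rho^2\bar\eta$. The $W$-part of $E_0$, namely $\ker\tilde A_{12}$ in $W$-space, gets $\bar\eta\tilde\mJ\ge c\bar\eta$ directly.

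\textbf{Main obstacle.} The delicate point is uniformity in the two independent small parameters when $\bar\eta\ll\rho^2$. Any non-skew remainder of order $\rho^2$ or $\rho^3$ would swamp the $\rho^2\bar\eta$ gain. The key is therefore to choose the block-diagonalizing transforms so that everything at $\bar\eta=0$ stays skew-Hermitian, which relies on unitary reduction of the Hermitian pencil. We then differentiate only once in $\bar\eta$, keeping the $\rho$-dependence of the projections explicit. The eigenvalue crossings of $P_0\tilde A_{11}P_0$ must be handled by grouping, as in Lemma 5 of \cite{FS21}.
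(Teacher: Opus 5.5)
Your overall architecture matches the paper's. You use a unitary block reduction of the skew-Hermitian matrix $L(\rho,0)$, so that every Hermitian part is $O(\bar\eta)$. One $\bar\eta$-derivative then gives $c\bar\eta$ on the blocks of nonzero eigenvalues and on $(0,\ker\tilde A_{12})$, and the $\rho\beta_1$-tilt of the kernel vector over $\alpha_1$ produces the factor $\rho^2$. The gap is in the kernel block.

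Your first-order expansion in $\bar\eta$ gives the Hermitian part of that block as $\bar\eta\,(Q(\rho)+O(\bar\eta))$ with $Q(\rho)\ge c_0\rho^2$. But $Q(0)$ annihilates $(\alpha_1,0)$, and nothing in your argument shows that the $O(\bar\eta)$ error carries a factor $\rho^2$ in that direction. So you only obtain a Hermitian part $\ge\bar\eta\,(c_0\rho^2-C\bar\eta)$. This covers $\bar\eta\lesssim\rho^2$ but not the regime $\rho^2\ll\bar\eta$, e.g.\ $\rho\to0$ at fixed small $\bar\eta$. Your ``main obstacle'' paragraph only addresses the opposite regime, which the unitary choice already settles.

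The paper closes this gap as follows.
\begin{itemize}
\item It splits the kernel block $\bar\eta\bar V_0$ into a simple eigenvalue $\bar\eta d$ and a $5\times5$ part $\bar\eta D$ with $D(0,0)>0$.
\item Since $(\alpha_1,0)$ is a left and right null vector of $L(0,\bar\eta)$ for \emph{every} $\bar\eta$, one has $d(0,\bar\eta)=0$. Since $\alpha_1^T\tilde A_{11}\alpha_1=0$, also $\partial_\rho d(0,\bar\eta)=0$. Hence $d=\rho^2\hat d$ with $\hat d$ regular.
\item The Gram bound $\bar V_0(\rho,0)\ge c_0\rho^2$ gives $\hat d(\rho,0)\ge c_0$, and continuity yields $\operatorname{Re}\hat d\ge c$ on a full neighbourhood of $(0,0)$.
\end{itemize}
This factorization of $d$, uniform in $\bar\eta$, is the missing idea.

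The remedy you propose instead does not supply it. You suggest splitting $E_0$ by the eigenvalues of $P_0\tilde A_{11}P_0$ and applying Lemma~5 of \cite{FS21} as in Proposition~\ref{prop2-small}. In fact $\ker L(\rho,0)$ is exactly six-dimensional for every $\rho$. It is spanned by $(\alpha_1,\rho\beta_1)$ with $\tilde A_{12}\beta_1=-\tilde A_{11}\alpha_1$, together with $(0,\beta)$ for $\beta\in\ker\tilde A_{12}$. The equation for $\beta_1$ is solvable since $\tilde A_{11}\alpha_1\perp\alpha_1$ and $\ker\tilde A_{12}^T=\mathrm{span}\{\alpha_1\}$. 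Consequently your $H(\rho)$ vanishes identically, the compression of $\mathrm{diag}(\tilde A_{11},0)$ to the kernel is zero, and the proposed splitting is vacuous.

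Moreover, in Proposition~\ref{prop2-small} that lemma exploits a nontrivial skew part $i\tilde A_{11}$, none of whose eigenvectors lies in the kernel of the dissipative part. Here the skew part of the kernel block is zero, while $Q(0)$ annihilates $(\alpha_1,0)$. So that mechanism is not available, and the $\rho^2$-factor on the degenerate direction has to come from the exact null-vector structure at $\rho=0$ described above.
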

For the compact set $\Omega_3$, it suffices to consider the matrix 
$$
H(\rho\bar{\eta},\rho\bar{\epsilon}) =\rho\begin{pmatrix}
i\rho\bar{\epsilon}^2\tilde A_{11} &\ i\bar{\epsilon}\tilde A_{12} \\[1mm]
\ i\bar{\epsilon}\tilde A_{21} &\bar{\eta} \tilde\mJ    
\end{pmatrix}
$$
for fixed $\bar{\epsilon}$ and $\bar{\eta}$.
For simplicity, we denote 
$$
G(\rho)=\begin{pmatrix}
i\rho \hat{A}_{11} &i\hat{A}_{12} \\[1mm]
\ i\hat{A}_{21} & \hat{\mJ}  
\end{pmatrix}:=\begin{pmatrix}
i\rho\bar{\epsilon}^2\tilde A_{11} &\ i\bar{\epsilon}\tilde A_{12} \\[1mm]
\ i\bar{\epsilon}\tilde A_{21} &\bar{\eta} \tilde\mJ    
\end{pmatrix}.
$$
Then we state
\begin{lemma}\label{lemmaG}
For fixed $\bar{\eta}>0$ and $\bar{\epsilon}>0$, there exist a constant $c > 0$ and a uniformly regular base transform $T = T(\rho)$ for $G(\rho)$  
such that the matrix $\tilde G(\rho) =T^{-1}(\rho)G(\rho)T(\rho)$ satisfies 
$$
\tilde G(\rho) + \tilde G^*(\rho) \geq c\rho^2 I
$$
for sufficiently small $\rho$.
\end{lemma}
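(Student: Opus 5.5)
The plan is to treat $G(\rho)$ as a regular perturbation of
$$
G(0)=\begin{pmatrix}0& i\hat A_{12}\\ i\hat A_{21}&\hat\mJ\end{pmatrix},
$$
following the scheme of the proof of Proposition \ref{prop2-small}. I will use that after symmetrization $\hat A_{21}=\hat A_{12}^*$, that $\hat A_{11}$ is Hermitian, and that $\hat\mJ>0$ (here $\bar\eta,\bar\epsilon>0$ are fixed).

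\emph{Step 1: spectrum of $G(0)$.} The Hermitian part of $G(0)$ is $\mathrm{diag}(0,2\hat\mJ)\ge0$, so every eigenvalue has nonnegative real part. Let $G(0)(v,w)=\lambda(v,w)$ with $\mathrm{Re}\,\lambda=0$. Then $\langle\hat\mJ w,w\rangle=0$, so $w=0$. This forces $\hat A_{21}v=0$ and $\lambda v=0$, hence $\lambda=0$.

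I will also show that $0$ is a semisimple eigenvalue. Suppose $G(0)(v',w)=(v,0)$ with $v\in\ker\hat A_{21}$. Then $v=i\hat A_{12}w\in\mathrm{range}\,\hat A_{12}\perp\ker\hat A_{12}^*=\ker\hat A_{21}$, so $v=0$. The same computation for $G(0)^*$ gives the same kernel, so the eigenprojection $P_0$ for $0$ is the orthogonal projection onto $\ker\hat A_{21}\times\{0\}$.

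From the explicit form of $A_{21}(\bomega)$, a vector $(\check\psi,u,\check\theta)$ lies in $\ker\hat A_{21}$ only if the trace-free symmetric part of $u\otimes\bomega$ vanishes, $\bomega\cdot u=0$, and $\check\theta=0$. Hence $u=0$, and $\ker\hat A_{21}$ is one-dimensional, spanned by a vector $e$ in the $\check\psi$-direction. All other eigenvalues of $G(0)$ have strictly positive real part.

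\emph{Step 2: block diagonalization.} By Kato's perturbation theory there is a regular $T_1(\rho)$ with $T_1(0)$ invertible and
$$
T_1^{-1}GT_1=\mathrm{diag}(V(\rho),W(\rho)),
$$
where $V$ is scalar and $V(0)=0$. Since the spectrum of $W(0)$ lies in $\{\mathrm{Re}>0\}$, Lemma \ref{lemma2} (with $P$ a single point) gives $T_3$ such that the Hermitian part of $T_3^{-1}W(0)T_3$ is at least $c$. By continuity this persists for $T_3^{-1}W(\rho)T_3$ with $\rho$ small.

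For $V$, the standard expansion with perturbation $i\rho\hat A_{11}$ gives
$$
V(\rho)=i\rho\langle\hat A_{11}e,e\rangle+\rho^2\langle S\hat A_{11}e,\hat A_{11}e\rangle+O(\rho^3),
$$
where $S$ is the reduced resolvent, i.e. the inverse of $G(0)$ on $\mathrm{range}(I-P_0)$. The sign of the $\rho^2$ term comes from $-(i)^2=1$. The first-order coefficient is $0$, because $\hat A_{11}$ maps the $\check\psi$-direction into the $u$-direction. So it suffices to show $\mathrm{Re}\langle Sx,x\rangle>0$ for $x=(\hat A_{11}e,0)$. This is the main point.

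\emph{Step 3: positivity of the second-order coefficient.} Put $y=(v_y,w_y)=Sx$, so that $G(0)y=x$. Then
$$
\mathrm{Re}\langle Sx,x\rangle=\mathrm{Re}\langle y,G(0)y\rangle=\langle\hat\mJ w_y,w_y\rangle\ge0.
$$
Suppose equality holds. Then $w_y=0$, so $G(0)y=(0,i\hat A_{21}v_y)$. This has zero $V$-component, whereas $x$ has $V$-component $\hat A_{11}e$. Now $\hat A_{11}e\neq0$, since its $u$-component is a nonzero multiple of $\hat p_\psi\bomega$ and $\hat p_\psi>0$. This is a contradiction, so $\mathrm{Re}\langle Sx,x\rangle=:2c_0>0$, and hence $\mathrm{Re}\,V(\rho)\ge c_0\rho^2$ for small $\rho$. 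Because the zero block is one-dimensional, no analogue of Lemma 5 of \cite{FS21} is needed.

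\emph{Conclusion.} Set $T(\rho)=T_1(\rho)\,\mathrm{diag}(1,T_3)$, which is uniformly regular. Then
$$
\tilde G(\rho)+\tilde G(\rho)^*\ge\min(2c_0\rho^2,c)\,I\ge c\rho^2 I
$$
for $\rho$ small. The step needing the most care is verifying that the reduced-resolvent term is strictly positive and that the projection is orthogonal; both reduce to the elementary kernel computations above.
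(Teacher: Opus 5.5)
Your proof is correct, and its architecture matches the paper's. Both isolate the one-dimensional kernel of $\hat A_{21}$ (your $e$, the paper's $\alpha_1$). Both split $G(\rho)$ into a scalar block $\lambda(\rho)$ with $\lambda(0)=0$ and a $13\times 13$ block whose spectrum at $\rho=0$ lies in the open right half-plane. Both then show that the $\rho^2$-coefficient of $\lambda$ has positive real part.

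Where you differ is in that last step. The paper evaluates $\lambda''(0)=2C_{12}'(0)Y'(0)$ from its appendix by inverting $\Lambda(0)$ explicitly through the Schur complement. This gives $\lambda''(0)=2a^T(K^T\hat\mJ^{-1}K)^{-1}a$ with $a=(\alpha_1^{\perp})^T\hat A_{11}\alpha_1$, i.e.\ the exact, real value of the coefficient. You instead combine Kato's reduced-resolvent formula with the dissipation identity $\mathrm{Re}\langle y,G(0)y\rangle=\langle\hat\mJ w_y,w_y\rangle$. Strict positivity then follows from $\hat A_{11}e\neq 0$ by a two-line contradiction, with no block inversion. You only control the real part, but that is all that is needed. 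Your argument also uses nothing beyond the dissipative structure $G(0)+G(0)^*=\mathrm{diag}(0,2\hat\mJ)$, so it transfers to any system with that structure.

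Your explicit check that $0$ is semisimple with orthogonal eigenprojection is exactly what justifies a fact the paper uses tacitly: $T_0^TG(0)T_0$ has vanishing first row and column.

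One small imprecision. After symmetrization the kernel vector is $e\propto(A^0_1)^{1/2}(1,0,0)^T$. This generally has a nonzero $\check\theta$-component, because $A^0_1$ couples $\check\psi$ and $\check\theta$, so $e$ is not purely in the $\check\psi$-direction. Your argument only uses that $e$ has zero $u$-component, so that $\hat A_{11}e$ lies in the $u$-block and is a nonzero multiple of $\bomega$ there. That remains true, so nothing breaks.
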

Next we prove the three lemmas.
\begin{proof}[Proof of Lemma \ref{lemmaK}]
~\\
As the proof is long and technically involved, we begin with an overview of the strategy and a sketch of the main steps to guide the reader through the argument.

\begin{itemize}
    \item[(1)] The multiplicity of the zero eigenvalue for $K(0,0)$ equals to $5$. Accordingly, we decompose the whole space as 
    $\mathbb{C}^{14}=E_0\oplus E_1$ with $\text{dim}(E_0)=5$. By the perturbation theory, for sufficiently small $\rho$ and $\bar{\epsilon}$, we also have 
$$\mathbb{C}^{14}=E_0(\rho,\bar{\epsilon})\oplus E_1(\rho,\bar{\epsilon})$$ with $\text{dim}(E_0(\rho,\bar{\epsilon}))=5$. We can show that the real part of 
$K(\rho,\bar{\epsilon})|_{E_1}$ is larger than $cI$ for a constant $c$. 
\item[(2)] We observe that $K(\rho,\bar{\epsilon})|_{E_0}$ has a factor of $\bar{\epsilon}^2$. Namely, we write 
$$
K(\rho,\bar{\epsilon})|_{E_0} = V(\rho,\bar{\epsilon}) = \bar{\epsilon}^2 \hat{V}(\rho,\bar{\epsilon})
$$
with $\hat{V}(\rho,\bar{\epsilon})$ being regular w.r.t. $\rho$ and $\bar{\epsilon}$. 
The matrix $\hat{V}(0,0)$ has one simple zero eigenvalue and $4$ positive eigenvalues. Accordingly, we divide 
$$
E_0(\rho,\bar{\epsilon})=E_0^{(1)}(\rho,\bar{\epsilon})\oplus E_0^{(2)}(\rho,\bar{\epsilon})
$$
with $\text{dim}(E_0^{(1)}(\rho,\bar{\epsilon}))=1$. Moreover, we can prove that the real part of $\hat{V}(\rho,\bar{\epsilon})|_{E_0^{(2)}}$ is larger than $c I$ for a constant $c$.
\item[(3)] By computation, we find that 
$\hat{V}(\rho,\bar{\epsilon})|_{E_0^{(1)}}$ has a factor of $\rho^2$. Namely, we write 
$$
K(\rho,\bar{\epsilon})|_{E_0^{(1)}} =\bar{\epsilon}^2\hat{V}(\rho,\bar{\epsilon})|_{E_0^{(1)}} = \rho^2\bar{\epsilon}^2 \bar{V}(\rho,\bar{\epsilon})
$$ 
with $\bar{V}(\rho,\bar{\epsilon})$ being another regular matrix.
At last, we finish the proof by showing that $\bar{V}(0,0)>0$.
\end{itemize}

\textbf{Step 1:}
At the point $(\rho,\bar{\epsilon})=(0,0)$, the unperturbed matrix reads
    $$
    K(0,0)=\begin{pmatrix}
        0 & \\
          & \tilde \mJ
    \end{pmatrix}.
    $$
    Thus there exists a regular transform $T_1=T_1(\rho,\bar{\epsilon})$ such that 
    $$
    T_1^{-1}(\rho,\bar{\epsilon})K(\rho,\bar{\epsilon})T_1(\rho,\bar{\epsilon}) = \begin{pmatrix}
        V(\rho,\bar{\epsilon}) & \\[1mm]
          & W(\rho,\bar{\epsilon})
    \end{pmatrix}
    $$
    with $V(\rho,\bar{\epsilon}) \in \mathbb{C}^{5\times 5}$, $W(\rho,\bar{\epsilon})\in \mathbb{C}^{9\times 9}$ and
    $$
    T_1(\rho,\bar{\epsilon}) = 
    \begin{pmatrix}
        I  & X(\rho,\bar{\epsilon})\\[1mm]
        Y(\rho,\bar{\epsilon})  & I
    \end{pmatrix}.
    $$
    Clearly, we have 
    $$
    V(0,0)=0,\qquad W(0,0)=\tilde\mJ.
    $$
    Therefore, for sufficiently small $\rho$ and $\bar{\epsilon}$, we have
    $$
    W(\rho,\bar{\epsilon})+W^*(\rho,\bar{\epsilon})\geq cI.
    $$
    To compare this formula with the sketch of proof, we remark that the matrix $V(\rho,\bar{\epsilon})$ corresponds to the subspace $E_0(\rho,\bar{\epsilon})$ and $W(\rho,\bar{\epsilon})$ corresponds to $E_1(\rho,\bar{\epsilon})$.
    
    \textbf{Step 2:} Next we investigate the matrix $V(\rho,\bar{\epsilon}) \in \mathbb{C}^{5\times 5}$. At first, we state that the matrix can be written as 
    \begin{equation}\label{lemma3-V}
    V(\rho,\bar{\epsilon})=\bar{\epsilon}^2 \hat{V}(\rho,\bar{\epsilon})
    \end{equation} 
    with $\hat{V}(\rho,\bar{\epsilon})\in \mathbb{C}^{5\times 5}$ being regular with respect to $\rho$ and $\bar{\epsilon}$. Indeed, for fixed
    $\rho>0$, we consider the computation in Appendix with the varying parameter  $\bar{\epsilon}$. In this case, we have $$V(\rho,0)=0.$$ Moreover, the relation \eqref{Vprime} shows that
    $$
    \frac{\partial V}{\partial \bar{\epsilon}}(\rho,0)=0.
    $$ 
    These two relations imply \eqref{lemma3-V}.
     Moreover, the computations \eqref{Yprime} and \eqref{Vprimeprime} give
    \begin{align*}
        \frac{\partial Y}{\partial \bar{\epsilon}}(\rho,0) =-(C_{22}(\rho,0))^{-1}\frac{\partial C_{21}}{\partial \bar{\epsilon}}(\rho,0)=-i\tilde \mJ^{-1} \tilde{A}_{21}
    \end{align*}
    and
    \begin{align*}
    \frac{\partial^2 V}{\partial \bar{\epsilon}^2}(\rho,0)=&~\frac{\partial^2 C_{11}}{\partial \bar{\epsilon}^2}(\rho,0) +2\frac{\partial C_{12}}{\partial \bar{\epsilon}}(\rho,0) \frac{\partial Y}{\partial \bar{\epsilon}}(\rho,0) +C_{12}(\rho,0)\frac{\partial^2 Y}{\partial \bar{\epsilon}^2}(\rho,0) \\[2mm]
    =&~2i\rho\tilde{A}_{11} + 2  \tilde{A}_{12} \tilde \mJ^{-1} \tilde{A}_{21}.
    \end{align*}
    Substituting the expression \eqref{lemma3-V} into this relation, we obtain
    \begin{equation}\label{lemma3-hatV}
    \hat{V}(\rho,0)=i\rho\tilde{A}_{11} +   \tilde{A}_{12} \tilde \mJ^{-1} \tilde{A}_{21}.
    \end{equation}
    Since $\hat{V}(0,0)=  \tilde{A}_{12} \tilde \mJ^{-1} \tilde{A}_{21}$ has one simple zero eigenvalue and $4$ positive eigenvalues, there exists a regular transform $S(\rho,\bar{\epsilon})$ such that 
    \begin{equation}\label{lemma3-V0V1}
    S^{-1}(\rho,\bar{\epsilon})\hat{V}(\rho,\bar{\epsilon})S(\rho,\bar{\epsilon}) = \begin{pmatrix}
        V_0(\rho,\bar{\epsilon}) & \\[1mm]
          & V_1(\rho,\bar{\epsilon})
    \end{pmatrix}
    \end{equation}
with $V_0(0,0)=0$ and $V_1(0,0)$ being a $4\times 4$ symmetric positive definite matrix. To sum up, we have 
$$
\begin{pmatrix}
S^{-1}(\rho,\bar{\epsilon}) & \\[1mm]
        & I
\end{pmatrix}T_1^{-1}(\rho,\bar{\epsilon})K(\rho,\bar{\epsilon})T_1(\rho,\bar{\epsilon}) \begin{pmatrix}
S(\rho,\bar{\epsilon}) & \\[1mm]
        & I
\end{pmatrix}
= \begin{pmatrix}
\bar{\epsilon}^2V_0(\rho,\bar{\epsilon}) & & \\[1mm]
 & \bar{\epsilon}^2V_1(\rho,\bar{\epsilon})& \\[1mm]
      &  & W(\rho,\bar{\epsilon})
\end{pmatrix},
$$
where 
$$
V_1(\rho,\bar{\epsilon})+V_1^*(\rho,\bar{\epsilon})\geq cI,\qquad W(\rho,\bar{\epsilon})+W^*(\rho,\bar{\epsilon})\geq cI
$$ 
for sufficiently small $\rho$ and $\bar{\epsilon}$.

To compare with the sketch of proof, we remark that the matrix $V_0(\rho,\bar{\epsilon})$ corresponds to the invariant space $E_0^{(1)}(\rho,\bar{\epsilon})$ and $V_1(\rho,\bar{\epsilon})$ corresponds to $E_0^{(2)}(\rho,\bar{\epsilon})$.

\textbf{Step 3:} Now we investigate the $\rho$-dependence of the number $V_0(\rho,\bar{\epsilon})\in\mathbb{C}$. To this end, we consider the original matrix $K(\rho,\bar{\epsilon})$ 
for fixed $\bar{\epsilon}>0$ and $\rho=0$:
$$
K(0,\bar{\epsilon})=\begin{pmatrix}
0 &\ i\bar{\epsilon}\tilde A_{12} \\[1mm]
\ i\bar{\epsilon}\tilde A_{21} & \tilde\mJ
\end{pmatrix}.
$$
We observe that it has one simple zero eigenvalue 
and 13 nonzero eigenvalues. Comparing with the computation in Step 2, we have
$$
\bar{\epsilon}^2V_0(0,\bar{\epsilon})\equiv 0.
$$
Then we can write
$$
V_0(\rho,\bar{\epsilon})=\rho \tilde{V}_0(\rho,\bar{\epsilon})
$$
with $\tilde{V}_0(\rho,\bar{\epsilon})$ a regular function. Furthermore, the left and right eigenvectors for $K(0,\bar{\epsilon})$ associated with the zero eigenvalue are 
$$
l(0,\bar{\epsilon})=\begin{pmatrix}
    \alpha_1^T &
    0
\end{pmatrix},\qquad
r(0,\bar{\epsilon})=\begin{pmatrix}
    \alpha_1\\[1mm]
    0
\end{pmatrix}.
$$
Here $\alpha_1\in\mathbb{R}^{5}$ is the vector in $\ker(\tilde{A}_{21})$ with $|\alpha_1|=1$. 
Taking partial derivative $\partial/\partial \rho$ on the relation 
$$
0=l(\rho,\bar{\epsilon})\Big(K(\rho,\bar{\epsilon}) - \bar{\epsilon}^2 \rho \tilde{V}_0(\rho,\bar{\epsilon})I \Big)r(\rho,\bar{\epsilon})
$$
and evaluating at $(0,\bar{\epsilon})$ yield
$$
0=l(0,\bar{\epsilon})\Big(\frac{\partial K}{\partial \rho}(0,\bar{\epsilon}) - \bar{\epsilon}^2 \tilde{V}_0(0,\bar{\epsilon})I \Big)r(0,\bar{\epsilon}).
$$
Notice that 
$$
l(0,\bar{\epsilon}) \frac{\partial K}{\partial \rho}(0,\bar{\epsilon}) r(0,\bar{\epsilon})=i\bar{\epsilon}^2\alpha_1^T\tilde{A}_{11}\alpha_1=0.
$$
Then $\tilde{V}_0(0,\bar{\epsilon})=0$ for any $\bar{\epsilon}$ and hence
$$
V_0(\rho,\bar{\epsilon})=\rho \tilde{V}_0(\rho,\bar{\epsilon}) = \rho^2\bar{V}_0(\rho,\bar{\epsilon})
$$
with $\bar{V}_0(\rho,\bar{\epsilon})$ being a regular function.  

It remains to show that $\bar{V}_0(0,0)>0$. We recall 
\eqref{lemma3-V0V1} that  
\begin{equation}\label{lemma3-matrixS}
S^{-1}(\rho,0)\hat{V}(\rho,0)S(\rho,0) = \begin{pmatrix}
        V_0(\rho,0) & \\[1mm]
          & V_1(\rho,0)
    \end{pmatrix}=\begin{pmatrix}
        \rho^2\bar{V}_0(\rho,0) & \\[1mm]
          & V_1(\rho,0)
    \end{pmatrix}.
\end{equation}
In order to compute $\bar{V}_0(\rho,0)$, we need to introduce a block-matrix expression of $S(\rho,0)$.
According to \eqref{lemma3-hatV}, we take 
$$
S_0=\begin{pmatrix}
    \alpha_1 & \alpha_1^{\perp}
\end{pmatrix}
$$
with $\alpha_1\in\mathbb{R}^{5}$ being the vector in $\ker(\tilde{A}_{21})$ and $\alpha_1^{\perp}\in \mathbb{C}^{5\times 4}$ being the orthogonal complement of $\alpha_1$, 
and compute that 
$$
S^{-1}_0\hat{V}(\rho,0)S_0 = S^T_0\hat{V}(\rho,0)S_0 = \begin{pmatrix}
        0 & i\rho K_{12}\\[1mm]
        i\rho K_{21} & K_{22}(\rho)
\end{pmatrix}. 
$$
Here we denote
\begin{align*}
K_{12} = K_{21}^T 
=\alpha_1^T\tilde{A}_{11}\alpha_1^{\perp},
\quad
K_{22}(\rho)= i\rho\Big[(\alpha_1^{\perp})^T\tilde{A}_{11}\alpha_1^{\perp}\Big]+(\alpha_1^{\perp})^T(\tilde{A}_{12} \tilde \mJ^{-1} \tilde{A}_{21})\alpha_1^{\perp}.
\end{align*}
Since $K_{22}(0)$ is a symmetric positive definite matrix, there exists a regular transform 
$$
\bar{S}(\rho)=\begin{pmatrix}
        I  & X_1(\rho)\\[1mm]
        Y_1(\rho)  & I
    \end{pmatrix}
$$
such that 
$$
\bar{S}^{-1}(\rho)
\begin{pmatrix}
        0 & i\rho K_{12} \\[1mm]
        i\rho K_{21} & K_{22}(\rho)
\end{pmatrix}\bar{S}(\rho) = 
\begin{pmatrix}
        V_0(\rho,0) & \\[1mm]
          & V_1(\rho,0)
    \end{pmatrix}=
\begin{pmatrix}
        \rho^2\bar{V}_0(\rho,0) & \\[1mm]
          & V_1(\rho,0)
    \end{pmatrix}.
$$
Namely, the transform matrix in \eqref{lemma3-matrixS} has the form
$$
S(\rho,0)=S_0\bar{S}(\rho).
$$
Then we exploit the computation in Appendix. By \eqref{Vprimeprime} and \eqref{Yprime}, we have 
$$
\frac{d Y_1}{d\rho}(0)=-iK_{22}^{-1}(0)K_{21} 
$$
and 
$$
\frac{\partial^2V_0}{\partial\rho^2}(0,0)=2iK_{12} Y_1'(0)=2K_{12} K_{22}^{-1}(0)K_{21} >0.
$$
Therefore we conclude 
$$
\bar{V}_0(0,0)=\frac12\frac{\partial^2V_0}{\partial\rho^2}(0,0)>0
$$
and complete the proof.
\end{proof}

\begin{proof}[Proof of Lemma \ref{lemmaL}]
~\\
We begin with an overview of the strategy and a sketch of the main steps.
\begin{itemize}
    \item[(1)] The multiplicity of the zero eigenvalue for $L(0,0)$ equals to $6$. Accordingly, we decompose the whole space as 
    $\mathbb{C}^{14}=E_0\oplus E_1$ with $\text{dim}(E_0)=6$. By the perturbation theory, for sufficiently small $\rho$ and $\bar{\eta}$, we also have 
$$\mathbb{C}^{14}=E_0(\rho,\bar{\eta})\oplus E_1(\rho,\bar{\eta})$$ with $\text{dim}(E_0(\rho,\bar{\eta}))=6$.
\item[(2)] We observe that $L(\rho,\bar{\eta})|_{E_0}$ has a factor of $\bar{\eta}$. Namely, we write 
$$
L(\rho,\bar{\eta})|_{E_0} = V(\rho,\bar{\eta}) = \bar{\eta} \bar{V}(\rho,\bar{\eta})
$$
with $\bar{V}(\rho,\bar{\eta})$ being regular w.r.t. $\rho$ and $\bar{\eta}$. 
The matrix $\bar{V}(0,0)$ has one simple zero eigenvalue and $5$ positive eigenvalues.
Then accordingly we decompose 
$$E_0(\rho,\bar{\eta})=E_0^{(1)}(\rho,\bar{\eta})\oplus E_0^{(2)}(\rho,\bar{\eta})$$
with $\text{dim}(E_0^{(1)}(\rho,\bar{\eta}))=1$.
\item[(3)] We prove the property 
$$
\dfrac{\partial L}{\partial\bar{\eta}}\Big|_{E_0^{(2)}\cup E_1}>0
$$
and show that the real part of $
L(\rho,\bar{\eta})|_{E_0^{(2)}\cup E_1}$ is larger than $c\bar{\eta}I$ for a constant $c$.
\item[(4)] By using the specific expression of basis function in $E_0^{(1)}$, we can prove that $L(\rho,\bar{\eta})|_{E_0^{(1)}}$ has a factor of $\rho^2$ and satisfies
$$L(\rho,\bar{\eta})|_{E_0^{(1)}}\geq c\rho^2\bar{\eta}.$$
\end{itemize}

\textbf{Step 1:}
Consider the matrix  
    $$
    L(\rho,0)=i\begin{pmatrix}
        \rho\tilde{A}_{11} & \tilde{A}_{12}\\[1mm]
        \tilde{A}_{21} & 0
    \end{pmatrix}
    $$
with parameters $\rho>0$ and $\bar{\eta}=0$.
There exists a regular orthogonal transform $\mathcal{O}(\rho)$ such that 
    \begin{equation}
    \mathcal{O}^{-1}(\rho)L(\rho,0)\mathcal{O}(\rho) = \begin{pmatrix}
        V_0(\rho) & & & \\[1mm]
        & V_1(\rho) & &\\[1mm]
        & & \ddots &\\[1mm]
        &  & & V_p(\rho)
    \end{pmatrix}.
    \end{equation}
    where $V_0(0)=0$ and each $V_k(\rho)\in \C^{n_k\times n_k}$ satisfies 
    $$
V_k(0)=i\Lambda_k=\text{diag}(\lambda_k,\dots,\lambda_k),\qquad 1\leq k\leq p.
    $$ 
Here $p$ represents the number of nonzero eigenvalues and $\lambda_k$ is the $k$-th nonzero eigenvalue of $L(0,0)$ with the multiplicity $n_k$. 
    
For further derivation, we  introduce the expression of $\mathcal{O}(\rho)$ as follows:
$$
\mathcal{O}(\rho)=\begin{pmatrix}
r_1(\rho),r_2(\rho),\dots,r_6(\rho),r_7(\rho),\dots,r_{14}(\rho)
\end{pmatrix}.
$$
Note that the dimension of kernel for $L(\rho,0)$ is $6$ for any $\rho>0$. Indeed, the basis of the kernel is given by 
\begin{equation}\label{lemma4-r1}
r_1(\rho)=\frac{1}{N_\rho}
\begin{pmatrix}
\alpha_1 \\[1mm]
\rho\,\beta_1
\end{pmatrix},\qquad (r_2,r_3,\dots,r_6)(\rho)\equiv\begin{pmatrix}
    0 & 0 & \dots & 0\\[1mm]
    \beta_2 & \beta_3 &\dots & \beta_6
\end{pmatrix}.
\end{equation}
Here $\alpha_1\in\ker(\tilde A_{21})$, $\beta_1$ is determined by
\begin{equation}\label{lemma4-eqal}
\tilde A_{12}\beta_1=-\tilde A_{11}\alpha_1,
\end{equation} $\beta_2,\beta_3,\dots,\beta_6\in \mathbb{R}^{9}$
form a basis of $\ker(\tilde A_{12})$, and $N_\rho$ is the normalization factor.
We show that the algebraic equation \eqref{lemma4-eqal} has a solution $\beta_1$.
Due to $\alpha_1^T\tilde A_{12}=0$ and $\alpha_1^T\tilde A_{11}\alpha_1=0$, we know that the rank of the augmented matrix $(\tilde A_{12}, \tilde A_{11}\alpha)$ equals to $4$ and thereby the equation \eqref{lemma4-eqal} has a solution $\beta_1$. Clearly, we can obtain $\beta_1$ such that it is orthogonal to $(\beta_2,\dots,\beta_6)$. Moreover, since $\alpha_1\in\ker(\tilde A_{21})$ and $\ker(\tilde A_{21})\cap \ker(\tilde A_{11}) = \{0\}$, we know that $\beta_1\neq 0$.

When $\rho=0$, the vectors $(r_7,r_8,\dots,r_{14})(0)$ correspond to the eigenvectors of $L(0,0)$ associated with nonzero eigenvalues. By a direct computation, we have
$$
r_k(0) = \begin{pmatrix}
    \alpha_k\\[1mm]
    \beta_k
\end{pmatrix}\quad \text{with}~~|\beta_k|\geq c ~~\text{for}~~ 7\leq k\leq 14.
$$
Spectral separation of $V_k(\rho)$ implies the existence of a smooth family $T_1(\rho,\bar{\eta})$ such that 
\begin{equation}\label{lemma4-eq1}
    T_1^{-1}(\rho,\bar{\eta})L(\rho,\bar{\eta})T_1(\rho,\bar{\eta}) = \begin{pmatrix}
        V_0(\rho,\bar{\eta}) & & & \\[1mm]
        & V_1(\rho,\bar{\eta}) & &\\[1mm]
        & & \ddots &\\[1mm]
        &  & & V_p(\rho,\bar{\eta})
    \end{pmatrix}.
    \end{equation}
When $\bar{\eta}=0$, we have $T_1(\rho,0)=\mathcal{O}(\rho)$. 

To compare this formula with the sketch of proof, we remark that the matrix $V_0(\rho,\bar{\eta})$ corresponds to the subspace $E_0(\rho,\bar{\eta})$ and $\text{diag}(V_1(\rho,\bar{\eta}),\dots,V_p(\rho,\bar{\eta}))$ corresponds to $E_1(\rho,\bar{\eta})$.


\textbf{Step 2:} From the computation in Step 1, we know that the dimension of the kernel of $L(\rho,0)$ equals to 6 for any $\rho>0$. This means
$$
V_0(\rho,0)=0
$$
for any $\rho>0$.
Then we write 
\begin{align}
V_0(\rho,\bar{\eta})=\bar{\eta}\bar{V}_0(\rho,\bar{\eta}).\label{lemma4-ReV}
\end{align}
with $\bar{V}_0\in \mathbb{C}^{6\times 6}$ being regular w.r.t. $(\rho,\bar{\eta})$. 
By a direct computation, we have
\begin{align*}
\bar{V}_0(0,0)=&~
\frac{\partial V_0}{\partial \bar{\eta}}(0,0)\\[1mm]
    =&~(I_6,~0)\frac{\partial }{\partial \bar{\eta}}\Big(T_1^{-1}(\rho,\bar{\eta})L(\rho,\bar{\eta})T_1(\rho,\bar{\eta})\Big)\begin{pmatrix}
        I_6\\[1mm]
        0
    \end{pmatrix}\bigg|_{(\rho,\bar{\eta})=(0,0)}\\[2mm]
    =&~(I_6,~0)\frac{\partial (T_1^{-1})}{\partial \bar{\eta}}(0,0) L(0,0)T_1(0,0) \begin{pmatrix}
        I_6\\[1mm]
        0
    \end{pmatrix}+(I_6,~0)T_1^{-1}(0,0) L(0,0)\frac{\partial T_1}{\partial \bar{\eta}}(0,0) \begin{pmatrix}
        I_6\\[1mm]
        0
    \end{pmatrix} \\[2mm]
    &+(I_6,~0)T_1^{-1}(0,0)\frac{\partial L}{\partial \bar{\eta}}(0,0)T_1(0,0) \begin{pmatrix}
        I_6\\[1mm]
        0
    \end{pmatrix} \\[2mm]
    =&~(r_1(0),\dots,r_6(0))^T\begin{pmatrix}
        0 & \\
          & \tilde \mJ
    \end{pmatrix}(r_1(0)\dots,r_6(0)).
\end{align*}
According to the expression of $r_1(0),\dots,r_6(0)$, we observe that $\bar{V}(0,0)$ has one simple zero eigenvalue and 5 positive eigenvalues. Thanks to this, there exists a regular transform $S(\rho,\bar{\eta})$ such that $S(0,0)=I$ and 
$$
    S^{-1}(\rho,\bar{\eta})\bar{V}_0(\rho,\bar{\eta})S(\rho,\bar{\eta}) = \begin{pmatrix}
        d(\rho,\bar{\eta}) & \\[1mm]
          & D(\rho,\bar{\eta})
    \end{pmatrix}.
    $$
To conclude, we have 
\begin{equation}\label{Lemma3-T-decom}
    T^{-1}(\rho,\bar{\eta})L(\rho,\bar{\eta})T(\rho,\bar{\eta}) = \begin{pmatrix}
        \bar{\eta}d(\rho,\bar{\eta}) & & & & \\[1mm]
        & \bar{\eta}D(\rho,\bar{\eta}) & & & \\[1mm]
        & & V_1(\rho,\bar{\eta}) & & \\[1mm]
        & & & \ddots &\\[1mm]
        & & & & V_p(\rho,\bar{\eta})
    \end{pmatrix}.
    \end{equation}
with 
$$
T(\rho,\bar{\eta}) = T_1(\rho,\bar{\eta}) \begin{pmatrix}
S(\rho,\bar{\eta}) & \\[1mm]
        & I
\end{pmatrix}.
$$
To compare this formula with the sketch, we remark that $d(\rho,\bar{\eta})$ corresponds to the subspace $E_0^{(1)}(\rho,\bar{\eta})$ and $D(\rho,\bar{\eta}))$ corresponds to $E_0^{(2)}(\rho,\bar{\eta})$.

\textbf{Step 3:}
For fixed $\rho>0$ and $\bar{\eta}=0$, we know that the matrix $L(\rho,0)$ is skew-Hermitian. Thus 
$$
V_k(\rho,0)+V_k^*(\rho,0)=0
$$
and it follows that 
$$
V_k(\rho,\bar{\eta})+V_k^*(\rho,\bar{\eta}) = \bar{\eta}\bar{V}_k(\rho,\bar{\eta}).
$$

Denote 
$$
J_0=(0,I_{5\times 5},0,\dots,0)^T,\qquad 
J_k=(0,\dots,0,I_{n_k},0,\dots,0)^T,\quad k\geq 1.
$$ 
We compute 
\begin{align*}
    &~J_k^T\frac{\partial }{\partial \bar{\eta}}\Big(T^{-1}(\rho,\bar{\eta})L(\rho,\bar{\eta})T(\rho,\bar{\eta})-\lambda_k I\Big)J_k\bigg|_{(\rho,\bar{\eta})=(0,0)}\\[2mm]
    =&~J_k^T\frac{\partial (T^{-1})}{\partial \bar{\eta}}(0,0) [L(0,0)-\lambda_kI] T(0,0) J_k+J_k^TT^{-1}(0,0) [L(0,0)-\lambda_kI]\frac{\partial T}{\partial \bar{\eta}}(0,0) J_k \\[2mm]
    &+J_k^TT^{-1}(0,0)\frac{\partial L}{\partial \bar{\eta}}(0,0)T(0,0) J_k \\[2mm]
    =&~J_k^T\mathcal{O}^{T}(0)\begin{pmatrix}
        0 & \\
          & \tilde \mJ
    \end{pmatrix}\mathcal{O}(0) J_k.
\end{align*}
The above matrix is positive definite for every \(k\geq0\). For \(k=0\), this follows from the linear independence of \(\beta_2,\ldots,\beta_6\) and \(\widetilde{\mathcal J}>0\). For \(k\geq1\), any vector in the corresponding nonzero eigenspace of \(L(0,0)\) with vanishing dissipative component must vanish, so the restriction of \(\operatorname{diag}(0,\widetilde{\mathcal J})\) to this eigenspace is positive definite.
By \eqref{Lemma3-T-decom}, the computation implies that 
$$
D(0,0)=\frac{\partial (\bar{\eta}D(\rho,\bar{\eta}))}{\partial \bar{\eta}}\Big|_{(\rho,\bar{\eta})=(0,0)}\geq cI,\quad \bar{V}_k(0,0)= \frac{\partial (V_k(\rho,\bar{\eta})+V_k^*(\rho,\bar{\eta}))}{\partial \bar{\eta}}\Big|_{(\rho,\bar{\eta})=(0,0)}\geq cI.
$$
Therefore, for sufficiently small $\rho$ and $\bar{\eta}$, we have 
$$
D(\rho,\bar{\eta})+D^*(\rho,\bar{\eta})\geq cI,\qquad V_k(\rho,\bar{\eta})+V_k^*(\rho,\bar{\eta}) \geq c\bar{\eta}I.
$$

\textbf{Step 4:} It remains to compute $d(\rho,\bar{\eta})$. For $\rho=0$ and $\bar{\eta}>0$, we consider the matrix $$
L(0,\bar{\eta})=\begin{pmatrix}
    0 & i\tilde{A}_{12}\\[1mm]
    i\tilde{A}_{21} & 0
\end{pmatrix}+\bar{\eta}
\begin{pmatrix}
    0 & 0\\[1mm]
    0 & \tilde\mJ
\end{pmatrix}
$$
For any $\bar{\eta}>0$, $r_1(0)$ is the eigenvector of this matrix associated with zero eigenvalue. We thus have
$$
d(0,\bar{\eta})=0
$$
for any $\bar{\eta}>0$. This means that there is a regular function $\bar{d}(\rho,\bar{\eta})$ such that
$$
d(\rho,\bar{\eta})=\rho \bar{d}(\rho,\bar{\eta}).
$$
Denote $r(\rho,\bar{\eta})$ and $l(\rho,\bar{\eta})$ as the first column of $T(\rho,\bar{\eta})$ and the first row of $T^{-1}(\rho,\bar{\eta})$.
Then we take partial derivative $\partial/\partial \rho$ on the relation 
$$
0=l(\rho,\bar{\eta})
\Big( L(\rho,\bar{\eta}) - \rho\bar{\eta} \bar{d}(\rho,\bar{\eta})I \Big)
r(\rho,\bar{\eta})
$$
at $(0,\bar{\eta})$ and obtain
$$
0=l(0,\bar{\eta})
\Big( \frac{\partial L}{\partial \rho}(0,\bar{\eta}) - \bar{\eta} \bar{d}(0,\bar{\eta})I \Big)
r(0,\bar{\eta}).
$$
Since $r(0,\bar{\eta})=r_1(0)$, $l(0,\bar{\eta})=r_1^T(0)$ and 
$$
r_1^T(0) \frac{\partial L}{\partial \rho}(0,\bar{\eta})r_1(0) = i\alpha_1^T\tilde{A}_{11} \alpha_1 = 0, 
$$
we have $\bar{\eta} \bar{d}(0,\bar{\eta})=0$ and thereby
$$
d(\rho,\bar{\eta})=\rho^2\hat{d}(\rho,\bar{\eta})
$$
with $\hat{d}(\rho,\bar{\eta})$ a regular function.


Let
$R(\rho)=(r_1(\rho),r_2(\rho),\ldots,r_6(\rho))$ be the
orthogonal basis of $\ker L(\rho,0)$ constructed in Step~1.
Since $L(\rho,0)R(\rho)=0$ and
$R^*(\rho)L(\rho,0)=0$, differentiation of the
six-dimensional block of \eqref{lemma4-eq1} at $\bar\eta=0$ gives
\begin{align*}
\bar V_0(\rho,0)
 &=R^*(\rho)
\begin{pmatrix}0&0\\0&\widetilde \mJ\end{pmatrix}
 R(\rho) = \Big(\frac{\rho}{N_{\rho}}\beta_1,\beta_2,\dots,\beta_6 \Big)^*\widetilde \mJ\Big(\frac{\rho}{N_{\rho}}\beta_1,\beta_2,\dots,\beta_6 \Big).
\end{align*}
Since $\widetilde \mJ > 0$ and $\beta_1,\dots,\beta_6$ are orthogonal to each other, we have 
\begin{align*}
\bar V_0(\rho,0)
 \geq c_0 \rho^2 I
\end{align*}
for sufficiently small $\rho$. In particular, the
eigenvalue $d(\rho,0)$ satisfies $d(\rho,0)\ge c_0\rho^2$.
Since $d(\rho,\bar\eta)=\rho^2\widehat d(\rho,\bar\eta)$
and $\widehat d$ is continuous, we obtain
$\operatorname{Re}\widehat d(\rho,\bar\eta)\ge c>0$
for sufficiently small $\rho$ and $\bar\eta$.
This completes the proof.

\end{proof}

\begin{proof}[Proof of Lemma \ref{lemmaG}] 
Consider the transform
$$
T_0 = \begin{pmatrix}
\alpha_1 & \alpha_1^{\perp} & 0\\[1mm]
 0 & 0 & I
\end{pmatrix}
$$
with 
$\alpha_1\in\ker{\hat{A}_{21}}$ and $\alpha_1^{\perp}$ being the orthogonal complement of $\alpha_1$ in $\mathbb{R}^5$. Then we compute
\begin{align*}
T_0^TG(\rho)T_0=&~\begin{pmatrix}
\alpha_1^T  & 0~\\[2mm]
(\alpha_1^{\perp})^T  & 0~\\[2mm]
0 & I~
\end{pmatrix}\left[\begin{pmatrix}
0 &i\hat{A}_{12} \\[2mm]
\ i\hat{A}_{21} & \hat{\mJ}  
\end{pmatrix}+\begin{pmatrix}
i\rho\hat{A}_{11} & 0 \\[2mm]
0 & 0
\end{pmatrix}\right]
\begin{pmatrix}
\alpha_1 & \alpha_1^{\perp} & 0\\[2mm]
 0 & 0 & I
\end{pmatrix}\\[2mm]
=&~
\begin{pmatrix}
~0 & 0 & 0 \\[2mm]
~0 & 0 & i(\alpha_1^{\perp})^T\hat{A}_{12} \\[2mm]
~0 & i\hat{A}_{21}\alpha_1^{\perp} & \hat{\mJ}  
\end{pmatrix}+\rho\begin{pmatrix}
0 & i\alpha_1^T\hat{A}_{11}\alpha_1^{\perp} & 0 \\[2mm]
i(\alpha_1^{\perp})^T\hat{A}_{11}\alpha_1 & i(\alpha_1^{\perp})^T\hat{A}_{11}\alpha_1^{\perp} & 0 \\[2mm]
0 & 0 & 0
\end{pmatrix}.
\end{align*}
Notice that the matrix $T_0^TG(0)T_0$ has one simple zero eigenvalue and 13 nonzero eigenvalues. Then there exists a regular transform $T_1(\rho)$ such that 
\begin{align}\label{lemma5-eq1}
T_1^{-1}(\rho)T_0^TG(\rho)T_0T_1(\rho)=\begin{pmatrix}
    \lambda(\rho) & \\[1mm]
     & \Lambda(\rho)
\end{pmatrix}.    
\end{align}
with 
$$
\lambda(0)=0,\qquad \Lambda(0)=
\begin{pmatrix}
0 & iK^T \\[1mm]
iK & \hat{\mJ}
\end{pmatrix},\qquad     
T_1(\rho)=
\begin{pmatrix}
        1 & X(\rho)\\[1mm]
        Y(\rho) & I
\end{pmatrix}.
$$
Here $K=\hat{A}_{21}\alpha_1^{\perp}\in \mathbb{R}^{9\times 4}$ has full-column rank, $X(\rho)\in \mathbb{C}^{1\times 13}$ and $Y(\rho)\in \mathbb{C}^{13\times 1}$.

Since any eigenvector of $\begin{pmatrix}
0 & iK^T \\
iK & 0
\end{pmatrix}$ is not included in the kernel of 
$\begin{pmatrix}
0 & 0 \\
0 & \hat{\mJ}
\end{pmatrix}$, we know that the eigenvalues of $\Lambda(0)$ has positive real parts and there exists a transform $S$ such that 
\begin{equation*} 
S^{-1}\Lambda(0)S + \Big(S^{-1}\Lambda(0)S\Big)^* \geq c I.
\end{equation*}
Then for sufficiently small $\rho$, we have
\begin{equation}\label{lemma5-eq2}
\tilde{\Lambda}(\rho)+\tilde{\Lambda}^*(\rho)\geq c I,\qquad \tilde{\Lambda}(\rho)=S^{-1}\Lambda(\rho)S.
\end{equation}
On the other hand, by the computation in Appendix, we have 
$$
\lambda'(0) = 0, \qquad Y'(0)=-\Lambda^{-1}(0)\begin{pmatrix}
i(\alpha_1^{\perp})^T\hat{A}_{11}\alpha_1\\[1mm]
0
\end{pmatrix}
,\qquad
\lambda''(0) = 2
\begin{pmatrix}
i\alpha_1^T\hat{A}_{11}\alpha_1^{\perp} & 
0
\end{pmatrix}Y'(0).
$$
The inverse matrix of $\Lambda(0)$ reads 
$$
\Lambda^{-1}(0) = \begin{pmatrix}
(K^T\hat{\mJ}^{-1}K)^{-1} & -i(K^T\hat{\mJ}^{-1}K)^{-1}K^T\hat{\mJ}^{-1} \\[2mm]
-i\hat{\mJ}^{-1}K(K^T\hat{\mJ}^{-1}K)^{-1} & \hat{\mJ}^{-1}\left[I-K(K^T\hat{\mJ}^{-1}K)^{-1}K^T\hat{\mJ}^{-1}\right]
\end{pmatrix}.
$$
Thus we compute 
$$
\lambda''(0)=2\left(\alpha_1^T\hat{A}_{11}\alpha_1^{\perp}\right)(K^T\hat{\mJ}^{-1}K)^{-1}\left((\alpha_1^{\perp})^T\hat{A}_{11}\alpha_1\right) >0,
$$
which implies 
\begin{equation}\label{lemma5-eq3}
    \operatorname{Re} \lambda(\rho)\geq c\rho^2\qquad \text{for sufficiently small}~~\rho.
\end{equation}
Combining \eqref{lemma5-eq1}, \eqref{lemma5-eq2} and \eqref{lemma5-eq3}, we can introduce 
$$
\tilde{G}(\rho)=T^{-1}(\rho)G(\rho)T(\rho)\quad\text{with}\quad T=T_0T_1(\rho)\begin{pmatrix}
    1 & 0 \\[1mm]
    0 & S
\end{pmatrix}
$$
and obtain 
$$
\tilde{G}^*(\rho)+\tilde{G}(\rho)\geq c\rho^2.
$$
This completes the proof of Lemma \ref{lemmaG}.
\end{proof}

\section*{\normalsize{Acknowledgments}} 
Heinrich Freist\"uhler was supported by grant 
FR 822/11 through DFG Priority Research Programme 2410
Hyperbolic Balance Laws. Yizhou Zhou is funded by Alexander von Humboldt Foundation (Humboldt Research Fellowship Programme for Postdocs).

\par\bigskip

\begin{appendix}
\section{A computation on block diagonalisation}
In this appendix, we consider the computation of the following transform:
$$
T(\eta)^{-1}C(\eta)T(\eta)=\begin{pmatrix}
                                 V(\eta)&0\\[1mm]
                                 0&W(\eta) 
                                 \end{pmatrix}
$$
with  
$$
C(\eta)=
\begin{pmatrix}
            C_{11}(\eta)&C_{12}(\eta)\\[1mm]
            C_{21}(\eta)&C_{22}(\eta)
            \end{pmatrix}, \qquad T(\eta)=\begin{pmatrix}
        I&X(\eta)\\[1mm]
        Y(\eta)&I  
        \end{pmatrix}.
$$
This is equivalent to the four matrix equations
\begin{eqnarray}
C_{11}+C_{12}Y&=&V\label{V}\\[1mm]
C_{11}X+C_{12}&=&XW\label{XW}\\[1mm]
C_{21}+C_{22}Y&=&YV\label{YV}\\[1mm]
C_{21}X+C_{22}&=&W\label{W},
\end{eqnarray}
which combine to   
\begin{eqnarray}
C_{11}X+C_{12}&=&X(C_{21}X+C_{22})\label{X}\\[1mm]
Y(C_{11}+C_{12}Y)&=&C_{21}+C_{22}Y.\label{Y}
\end{eqnarray}
Assuming that the matrix $C_{22}(0)$ is invertible, $C_{11}(0)=0$ and \(C_{12}(0)=0,~C_{21}(0)=0\), we know that the above equations uniquely determine $X(\eta)$ and $Y(\eta)$, as unique smooth functions for small values of $\eta$, with 
$$X(0)=0,\qquad Y(0)=0.$$ 
From \eqref{V} and \eqref{W}, we have
$$
V(0)=C_{11}(0)=0,\qquad 
W(0)=C_{22}(0).
$$
Differentiating equation \eqref{V} yields 
\be\label{Vprime}
V'=C_{11}'+C_{12}'Y+C_{12}Y',
\ee
Differentiating equations \eqref{Y} and \eqref{Vprime} gives the relations
\begin{align}
Y'(C_{11}+C_{12}Y)+Y(C_{11}+C_{12}Y)'=&~C_{21}'+C_{22}'Y+C_{22}Y',\nonumber\\[1mm]
V''=&~C_{11}''+2C_{12}'Y'+C_{12}Y'',\label{Vprimeprime}
\end{align}
which show that 
\be\label{Yprime}
Y'(0)=-(C_{22}(0))^{-1}C_{21}'(0).
\ee
\end{appendix}


\begin{thebibliography}{22}

\bibitem{B} G. Boillat: {\it Sym\'etrisation des syst\`emes d'\'equations aux d\'eriv\'ees 
partielles avec densit\'e d'\'energie convexe et contraintes.} Comptes Rendus, S\'er.\ 1, {\bf 295} (1982), 551--554. 

\bibitem{By} S. Boyaval: {\it Viscoelastic flows of Maxwell fluids with conservation laws.}
ESAIM: Mathematical Modelling and Numerical Analysis {\bf 55} (2021), 807--831

\bibitem{CKX} T. Crin-Barat, S. Kawashima, J. Xu:
{\it The Cattaneo-Christov approximation of Fourier heat-conductive compressible fluids,}
Comm.\ Math.\ Phys. {\bf 406} (2025), article number 237.

\bibitem{HR3} 
Y. Hu, R. Racke: {\it Global existence versus blow-up for multidimensional hyperbolized compressible Navier-Stokes equations,} SIAM J. Math. Anal. {\bf 55} (2023), no.~5, 4788--4815.

\bibitem{HR2} 
Y. Hu, R. Racke: {\it Hyperbolic compressible Navier-Stokes equations,} J. Differential Equations {\bf 269} (2020), no.~4, 3196--3220.

\bibitem{HR1} 
Y. Hu, R. Racke, {\it Compressible Navier-Stokes equations with revised Maxwell's law,} J. Math. Fluid Mech. {\bf 19} (2017), no.~1, 77--90.

\bibitem{F22} H. Freist\"uhler: {\it Time-asymptotic stability for first-order 
symmetric hyperbolic systems of balance laws in dissipative compressible fluid dynamics}.
Quart.\ Appl.\ Math.\ {\bf 80} (2022), 597--606.

\bibitem{FS21} H. Freist\"uhler, M. Sroczinski:
{\it A class of uniformly dissipative symmetric hyperbolic-hyperbolic systems}.
J.\ Differ.\ Equ.\ {\bf 288} (2021), 40--61.

\bibitem{FL} 
K. O. Friedrichs, P.  Lax: {\it Systems of conservation equations with a convex extension,} Proc.\ Nat.\ Acad.\ Sci.\ USA {\bf 68} (1971), 1686--1688.   

\bibitem{G} 
S. K.   Godunov: {\it An interesting class of quasilinear systems.} 
Dokl.\ Akad.\ Nauk SSSR 139 (1961), 525--523. 

\bibitem{Kato-book} T. Kato:
{\it Perturbation theory for linear operators,} {Springer-Verlag, Berlin},
(1995), xxii+619,

\bibitem{K83} S. Kawashima: {\it Large-time behaviour of solutions to hyperbolic-parabolic systems of
conservation laws and applications.} Proc.\ Roy.\ Soc.\ Edinburgh Sect.\ A {\bf 106} (1987), 169--194.  


\bibitem{MR} I. M\"uller, T. Ruggeri: 
{\it Rational Extended Thermodynamics. Second edition,}  
Springer Tracts in Natural Philosophy, 
37. Springer-Verlag, New York, 1998.

\bibitem{PZ2} Y.-J. Peng and L. Zhao, {\it Global convergence rates from relaxed Euler equations to Navier-Stokes equations with Oldroyd-type constitutive laws,} Nonlinearity {\bf 37} (2024), no.~9, Paper No. 095032, 26 pp.

\bibitem{PZ1} Y. J. Peng, L. Zhao, {\it Global convergence to compressible full Navier-Stokes equations
 by approximation with Oldroyd-type constitutive laws,} J. Math. Fluid Mech., 24 (2022), 17.


\bibitem{R83} 
T. Ruggeri: {\it Symmetric-hyperbolic system of conservative equations for a viscous 
heat conducting fluid.} Acta Mech.\ {\bf 47} (1983), 167--183.

\bibitem{RS}
T. Ruggeri, M. Sugiyama: {\it Classical and relativistic rational extended thermodynamics of gases,}
Springer, Cham: 2021.  

\bibitem{SK85} Y. Shizuta, S. Kawashima: {\it Systems of equations of hyperbolic-parabolic type with applications to the discrete Boltzmann equation.} Hokkaido Math. J. {\bf 14} (1985), 249-275.
 
\bibitem{UKS} T. Umeda, S. Kawashima, Y. Shizuta:
{\it On the decay of solutions to the
linearized equations of electromagnetofluid dynamics}.
Japan J. Appl. Math. {\bf 1} (1984),
435--457.

\bibitem{Y} W.-A. Yong: \textit{Newtonian limit of Maxwell fluid flows.} Arch.\ Ration.\ Mech.\ Anal.\ {\bf 214} (2014), 913--922. 



\end{thebibliography}
\end{document}